\documentclass[11pt]{amsart}
\usepackage{Setup}
\usepackage{mathtools}
\usepackage{float}
\mathtoolsset{showonlyrefs}
\renewcommand\Re{\mathrm{Re}}
\renewcommand\mathhlr[1]{#1}
\title[Localized Enhanced Dissipation: A Hypocoercivity Approach]{Localized Enhanced Dissipation: A Hypocoercivity Approach\\ \vspace{0.5cm}
\small  To Eitan, for years of guidance.
}
\date{\today}
\author{Siming He}
\address{Department of Mathematics, University of South Carolina, Columbia, USA}
\email{siming@mailbox.sc.edu}
\thanks{\textbf{Acknowledgment.} Research of SH is supported by NSF grant DMS-2406293.}
\begin{document}
\begin{abstract}
 In this paper, we consider the passive scalar solutions in shear flows with critical points. With a detailed hypocoercivity functional, we develop streamline-wise enhanced dissipation estimates.    
\end{abstract}
\maketitle
\section{Introduction}
In this paper, we consider the passive scalar equation on the two-dimensional domain:
\begin{align}
\pa_t F+U(y)\pa_x F=\nu \de F,\quad F(t=0,x,y)=F_0(x,y). \label{PS}
\end{align}
Here, the solution $F$ represents density, which is transported by the shear flow  $(U(y),0)$ and is subject to diffusion. The dissipation effect in the system is quantified by the small diffusivity $\nu\in(0,1]$. Throughout the paper, the spatial domain is chosen to be $ \Torus^2,\, \Torus\times\rr$ with $\Torus:=\rr/2\pi \mathbbm{Z}$. 

We focus on the \emph{enhanced dissipation phenomenon} associated with equation \eqref{PS}. In the absence of the transport term $U\pa_x f$, the passive scalar equation reduces to the classical heat equation, and the standard energy estimate implies that the solution decays on a time scale $\mathcal{O}(\nu^{-1})$. However, if the transport term is present in the dynamics, the shearing effect of the flow $U$ induces the creation of small spatial scales, which in turn amplifies the dissipation effect. As a consequence of this \emph{small-scale creation (mixing)}, a specific component of the passive scalar solution decays on a time scale that is much faster than the heat dissipation time scale $\mathcal{O}(\nu^{-1})$ (see, e.g., \cite{ElgindiCotiZelatiDelgadino18,FengIyer19}). This phenomenon is called \emph{enhanced dissipation} (\cite{BMV14}) or \emph{relaxation enhancing} (\cite{ConstantinEtAl08}). 

The study of the delicate interaction between the transport and diffusion phenomena dates back to Lord Kelvin (\cite{Kelvin87}), A. Kolmogorov (\cite{Kolmogoroff34}), and L. H\"ormander (\cite{Hormander67}). Their works form the foundation for understanding enhanced dissipation phenomenon in strictly monotone shear flows, e.g., Couette flow $(y,0)$ with $y\in \rr$. To make the enhanced dissipation mathematically precise, we take the $x$-Fourier transform of the equation and obtain the following 
\begin{align}
    \pa_t \wh F_k+Uik \wh F_k=-\nu|k|^2\wh F_k+\nu \pa_y ^2 \wh F_k,\quad \wh F_k(t=0)=\wh F_{0;k},\quad k\in \mathbb Z. \label{PS_k}
\end{align}
The quantitative enhanced dissipation estimate for strictly monotone shear flows reads as follows
\begin{align}
     \|\wh  F_k(t)\|_{L^2_y(\rr)}\leq C \|\wh F_{0;k}\|_{L_y^2(\rr)}e^{-\delta \nu^\f{1}{3}|k|^\f{2}{3} t},\quad k\neq 0.\label{ED_mono}
\end{align}
One observes that as long as the horizontal wave number $k\neq 0$, the solution $\wh F_k$ decays with rate $\mathcal{O}(\nu^\f{1}{3})$, which is much faster than the heat decay rate $\mathcal{O}(\nu)$ in the parameter regime where $0<\nu\ll1$.  Hence, this is the \emph{quantitative description} of \emph{enhanced dissipation phenomenon}. Moreover, this improvement fails for the $k=0$ component, which is governed by the classical heat equation. 

In recent years, extensive research has focused on the enhanced dissipation associated with general shear flows with critical points. These works begin with the remarkable paper \cite{BCZ15}, which employs hypocoercivity techniques from kinetic theory (\cite{villani2009}) to develop enhanced dissipation for a large family of shear flows. To present the result, we restrict ourselves to the shear flow $U(y)=\sin(y)\, (y\in\Torus)$, which has the property that it only has two non-degenerate critical points on the torus $\Torus=\rr/2\pi \mathbb{Z}$. 
It is derived in the work  \cite{BCZ15} that the following hypocoercivity functional decays 
    \begin{align}
        \mathcal{G}[\wh F_k]:=\|\wh F_k\|_{L^2}^2+\al_k\nu^\f12 \|\pa_y \wh F_k\|_{L^2}^2+\beta_k \Re\lan iU' k\wh F_k,\pa_y \wh F_k\ran+\gamma_k\nu^{-\f12}\|U' \wh F_k\|_{L^2}^2. 
    \end{align}
    In fact, for suitably chosen parameters $(\al_k,\beta_k,\gamma_k)$, there exist universal constants  $C\geq 1, \, \delta\in(0,1)$ such that for all $0<\nu\leq 1$, the following estimate holds
    \begin{align}
    \mathcal{G}[\wh  F_k(t)]\leq \mathcal{G}[\wh  F_{0;k}]e^{-\delta \nu^\f12|k|^\f12 t},\quad k\neq 0. \end{align}
The above idea has been refined by various authors \cite{WeiZhang19,LiZhao21,CobleHe23} to obtain a sharper result,
\begin{align}
  \|\wh  F_k(t)\|_{L^2}\leq C \|\wh F_{0;k}\|_{L^2}e^{-\delta \nu^\f12|k|^\f12 t},\quad k\neq 0.\label{ED_intro}
\end{align}
Even though this decay rate $\mathcal{O}(\nu^\f12)$ is much faster than the heat decay rate $\mathcal{O}(\nu)$ when $\nu$ is small, it is much slower than the rate derived for the strictly monotone shear case \eqref{ED_mono}. This discrepancy is rooted in the local geometry of the shear: near critical points, the vanishing of the velocity gradient $U'$ suppresses the generation of small scales via the shearing mechanism, thereby reducing the dissipation enhancement. The sharpness of the estimate \eqref{ED_intro} is confirmed for the solutions concentrating near an $\mathcal{O}(\nu^{\f14})$-neighborhood of the shear profile's critical points, see, e.g., \cite{CotiZelatiDrivas19}.  Subsequently, researchers have developed multiple methodologies to derive enhanced dissipation. These methods include resolvent analysis (\cite{Wei18,He21,LiWeiZhang20,CotiZelatiDolceFengMazzucato,ColomboCotiZelatiWidmayer20}), H\"ormander hypoelliptic method (\cite{AlbrittonBeekie25}), stochastic method (\cite{GardnerLissMattingly24}) and energy method (\cite{ElgindiCotiZelatiDelgadino18}). 

In this paper, we focus on developing the \emph{streamline-wise}/\emph{localized} enhanced dissipation phenomenon. To motivate the problem, we note that even though the work \cite{CotiZelatiDrivas19} confirmed the optimality of the enhanced dissipation rate $\mathcal O(\nu^{\f12})$ for shear flows with nondegenerate critical points, there is a caveat. The quantitative decay \eqref{ED_intro} is only known to be sharp for solutions concentrated near the critical layer surrounding the critical point $y_{\rm crit}$ (Figure \ref{fig:Crit_Layer}). In reality, the initial data can be distributed anywhere on the torus. If the support of the initial data is far away from the critical point $y_{\rm crit}$ of $U$, can we obtain better results? In Figure \ref{fig:Crit_Layer}, we present a detailed decomposition of the spatial domain based on the shear profile. One expects the following description. In the critical layer, the fluctuation decays with enhanced dissipation rate $\mathcal{O}(\nu^\f12)$, whereas in the monotone region ($|U'|\sim 1$), this decay rate improves to $\mathcal{O}(\nu^\f{1}{3})$. This transition motivates the concept of \emph{streamline-wise/local} enhanced dissipation. One expects that the enhanced dissipation rate should transit smoothly from $\mathcal{O}(\nu^\f12)$ (critical layer)  to $\mathcal{O}(\nu^\f{1}{3})$ (monotone region) within the transition layer. Recently, there are increasing interest in developing these estimates (\cite{GardnerLissMattingly24,AlbrittonBeekie25}). In \cite{GardnerLissMattingly24}, the authors applied delicate stochastic analysis to develop the enhanced dissipation. In \cite{AlbrittonBeekie25}, the authors applied detailed resolvent analysis. 
\begin{figure}[H]
    \centering
\includegraphics[width=7.5cm]{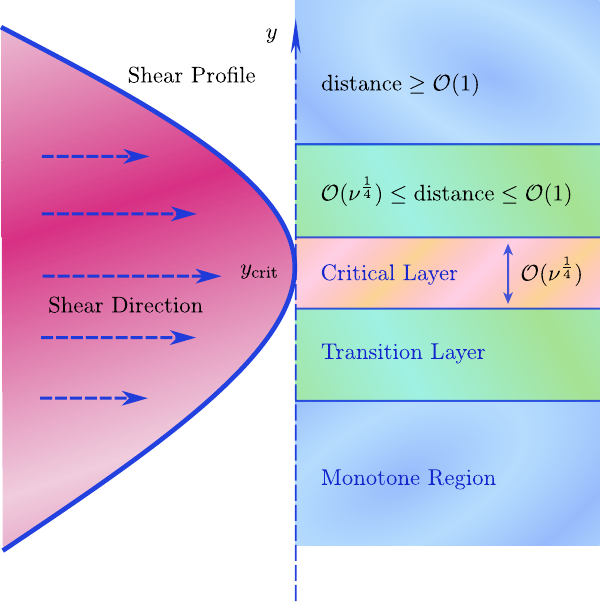}
    \caption{Critical Layer}
    \label{fig:Crit_Layer}
\end{figure}%conjectured ($k=1$) 

To analyze the system \eqref{PS_k} and establish the streamline-wise enhanced dissipation, we consider the following special Fourier coefficient
\begin{align} 
f(t,y):=\wh F_1(t,y)e^{\nu |k|^2t}. 
\end{align}
The quantity $f(t,y)$ solves the following  hypoelliptic equation
\begin{align}
   \pa_t f+iU f=\nu \pa_y^2f,\quad f(t=0,y)=f_0(y)=\wh F_{0;1}(y).  \label{EQ:hypo_PS}
\end{align} It can be checked that the other general cases with $k\neq 0$ can be reduced to \eqref{EQ:hypo_PS}. For the detailed argument, see, e.g. \cite{He21}.

To define a physically relevant hypocoercivity functional, we need a spatial-temporal weight that captures the streamline-wise dissipation. Define the modified shear strength \begin{align}  &B(y):=\max\{|U'(y)|,\nu^{1/4}\}. \label{B}\end{align} This quantity is introduced to measure the effective shearing of the flow. We note that $B\equiv\nu^{\f14}$ in the critical layer. The reason is that even though the shear gradient $U'$ is vanishingly small near the critical point, the solution can still diffuse in the critical layer, and the experience of a nontrivial shearing effect can be approximated by a $\nu$-dependent constant. With the modified shear strength $B$, we define the spatial-temporal weight $W$, \begin{align} 
\label{W}
&W(t,y):=\exp\Bigl\{\varsigma \nu^\f{1}{3}B^{\f23}(y)\max\bigl\{\nu^{-\f{1}{3}}B^{-\f23}(y),\min\{t,\nu^{-\f12}\}\bigr\}\Bigr\},\quad \varsigma\in(0,1). 
\end{align}In specific space-time regimes, the weight $W$ provides a continuous interpolation between the enhanced dissipation effects in the critical layer and in the monotone region. 
In Figure \ref{fig:1}, we can find a sketch of the exponential part of the spatial-temporal weight $W$. The analytical properties of the weight $W$ are summarized in Lemma \ref{lem:W_est}. 
\begin{figure}[H]
    \centering
    \includegraphics[width=0.5\linewidth]{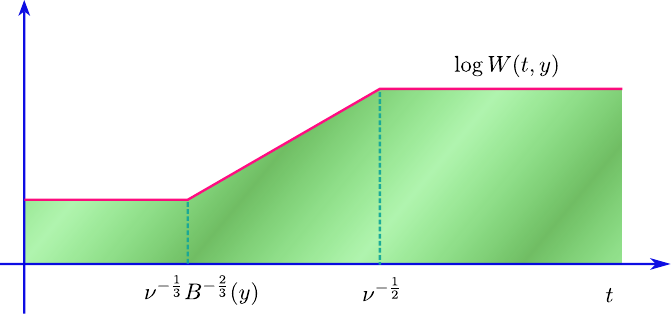}
    \caption{The weight $\log W$.}
    \label{fig:1}
\end{figure}
We consider the following hypocoercivity functional
    \begin{align}\label{Hypo_F}\begin{split}
\Phi[f]:=&\|fW\|_{L^2}^2+ {\frac{1}{4}\beta^\f12} \nu^{\f23}\|\sqrt{\varphi} B^{-\f{1}{3}}\pa_y f W\|_{L^2}^2+\beta\nu^{\f{1}{3}}\Re\lan \varphi^2 B^{-\f43} iU' f,\pa_y fW^2\ran\\ &+ {4\beta^\f32}\|\varphi^\f32 U'f B^{-1}W\|_{L^2}^2. 
    \end{split}
    \end{align} 
Here, the following weight is introduced to ensure that $\Phi[f_0]\approx\|f_0\|_{L^2}^2:$  \begin{align} \label{varphi}
&\varphi(t,y):=\min\{1,\nu^\f{1}{3}\max\{|U'(y)|,\nu^\f14\}^{\f23}t\}=\min\{1,\nu^\f{1}{3} B^{\f23}(y)t\}.
\end{align}
    Throughout the paper, we assume that $\beta(U)\in(0,1]$ with a specific value chosen in the proof. The classical $\al,\gamma$-parameters in the hypocoercivity are chosen as $\al :=\frac{1}{4}\beta^\f12, \; \gamma: = 4\beta^\f32 $. Moreover, we use the notation $G'(t,y):=\pa_y G(t,y)$.  %In this note, the main goal is to understand the behavior of the passive scalar equation prior to the enhanced dissipation time scale $\mathcal{O}(\nu^{-\f12})$. 
    We present the following main theorem. %estimate %\begin{align} %   \Phi[f(t)]\leq \Phi[f_0]=\|f_0\|_{L^2}^2,\quad \forall t\in[0, \nu^{-\f12}].     %\end{align}
\begin{thm}\label{thm:ED}
Consider the smooth solution $f$ to the equation \eqref{EQ:hypo_PS}. Further assume that the shear flow profile $U$ only has finitely many nondegenerate critical points and the diffusion coefficient $\nu\in(0,1]$. Then the  functional $\Phi$ \eqref{Hypo_F} has the following estimate
\begin{align}
    \|f(t)W(t)\|_{L^2}^2\leq \Phi[f(t)]\leq e^3\|f_0\|_{L^2}^2 e^{-\delta \nu^\f12 t},\quad \forall t\geq0.\label{SL_ED}
\end{align}
Here, $\delta\in(0,1)$ is a constant that depends only on $U.$
    \end{thm}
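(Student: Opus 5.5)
We will prove \eqref{SL_ED} by a weighted hypocoercivity energy argument: differentiate $\Phi[f(t)]$ in time along \eqref{EQ:hypo_PS} and show a differential inequality of the form
\begin{align}
\frac{d}{dt}\Phi[f]+\mathcal D[f]\le \text{(weight-derivative terms)}\le \frac{\mathbbm 1_{t\le \nu^{-1/2}}}{?}\,\cdots,
\end{align}
where $\mathcal D[f]$ collects the good terms $\nu\|\pa_y f W\|^2$, $\beta^{1/2}\nu^{5/3}\|\sqrt\varphi B^{-1/3}\pa_y^2 fW\|^2$, $\beta\nu^{1/3}\|\varphi B^{-2/3}U' fW\|^2$, and so on. The plan has three stages. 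First, we check the equivalence $\Phi[f]\approx\|fW\|_{L^2}^2$ with constants near $1$. The cross term is bounded by Cauchy--Schwarz, $\beta\nu^{1/3}\varphi^2B^{-4/3}|U'||f||\pa_yf|\le \tfrac18\beta^{1/2}\nu^{2/3}\varphi B^{-2/3}|\pa_yf|^2+2\beta^{3/2}\varphi^3B^{-2}|U'|^2|f|^2$, using $\varphi\le1$; this is exactly why $\alpha=\frac14\beta^{1/2}$ and $\gamma=4\beta^{3/2}$ were chosen. At $t=0$ we have $\varphi=0$ and $W=\exp(\varsigma)\le e$, so $\Phi[f_0]\le e^2\|f_0\|^2$. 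This gives the first inequality of \eqref{SL_ED} and the initial bound.

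Second, we compute $\frac{d}{dt}$ of each of the four pieces, following the \cite{BCZ15} structure with weights. The $L^2$ piece gives $-2\nu\|\pa_yfW\|^2$ plus the terms $2\langle f,f\pa_tW\,W\rangle$ and $-4\nu\Re\langle\pa_y f,fWW'\rangle$. The $\alpha$ piece gives the $\pa_y^2$ dissipation and a commutator $\Re\langle iU'f,\dots\rangle$. The $\beta$ piece produces the key coercive term $\beta\nu^{1/3}\|\varphi B^{-2/3}|U'| fW\|^2$, since $\pa_t(\pa_yf)$ contains $-iU'f$. The $\gamma$ piece produces $-\gamma\nu\|\varphi^{3/2}U'B^{-1}\pa_yfW\|^2$ plus lower-order terms involving $U''$. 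All error terms are then absorbed by the good terms for $\beta$ small, depending on $\|U\|_{C^3}$. We use $|B'|\lesssim1$ (with $B$ constant in the critical layer and $|U'|$ elsewhere), $|\varphi'|\lesssim \nu^{1/3}B^{-1/3}t$ wherever $\varphi<1$, and, from Lemma \ref{lem:W_est}, $|W'|\lesssim \varsigma\nu^{1/3}B^{-1/3}(1+\min\{t,\nu^{-1/2}\})W$ together with the nonnegativity and size of $\pa_tW$.

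Third comes what we expect to be the main obstacle: controlling the growth term $2\langle\pa_tW\,W f,f\rangle$, which is positive. On the active region $\pa_tW=\varsigma\nu^{1/3}B^{2/3}W$. Split the $y$-domain into the critical layer, where $|U'|\le\nu^{1/4}$, and its complement.
\begin{itemize}
\item On the complement, for $t\gtrsim\nu^{-1/3}B^{-2/3}$ (where $\varphi=1$), the coercive term $\beta\nu^{1/3}B^{-4/3}|U'|^2|f|^2W^2\ge\beta\nu^{1/3}B^{2/3}|f|^2W^2$ dominates, provided $\varsigma\ll\beta$.
\item For earlier times the weight $W$ is frozen at $e^{\varsigma}$, because the max in \eqref{W} picks $\nu^{-1/3}B^{-2/3}$. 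So $\pa_tW=0$ there, and the transition is handled by the continuity of $W$.
\item In the critical layer, $B=\nu^{1/4}$ and $\pa_tW\sim\varsigma\nu^{1/2}$. There the spectral-gap substitute is the interpolation $\nu^{1/2}\|g\|^2\lesssim \nu\|\pa_y g\|^2+\nu^{1/3}\nu^{-1/3}\|U'g\|^2$ near a nondegenerate critical point, applied to $g=fW$ localized by a cutoff. This costs the commutator $fW'$, which is controlled by Lemma \ref{lem:W_est}.
\end{itemize}
Once these are combined, we obtain $\frac{d}{dt}\Phi\le-\delta\nu^{1/2}\Phi+\text{(small)}$. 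The residual is the loss from $\varphi<1$ and the $\varphi'$ terms for $t\le\nu^{-1/2}$. We estimate it by $\int_0^{t}\delta\nu^{1/2}\,ds\cdot\Phi\lesssim\Phi$, which gives at most one extra factor $e$ and hence the constant $e^3$ via Gr\"onwall. After $t\ge\nu^{-1/2}$ the weight is time-independent and the standard $\nu^{1/2}$ hypocoercive decay closes.
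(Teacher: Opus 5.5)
Your outline is the paper's argument: the same functional, the same four-piece differentiation, $\varsigma\ll\beta$ (the paper takes $\varsigma=\beta^{3/2}$), a localized interpolation inequality at the nondegenerate critical points (Lemma~\ref{lem:spec}), and $\Phi$ nonincreasing on $[0,\nu^{-1/2}]$ followed by $\nu^{1/2}$-decay, with $e^{2\varsigma}e^{\delta}\le e^3$. However, Stage 3 puts the critical-layer difficulty in the wrong place.

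Where $B=\nu^{1/4}$ we have $\nu^{-1/3}B^{-2/3}=\nu^{-1/2}$. The maximum in \eqref{W} therefore always selects this value, and $W\equiv e^{\varsigma}$ there for all $t$. So $\partial_tW=0$ in the critical layer, not $\sim\varsigma\nu^{1/2}$. For $t<\nu^{-1/2}$, the growth term $2\int|f|^2W\partial_tW\,dy$ is supported where $\nu^{1/3}B^{2/3}t\ge1$. That forces $B=|U'|>\nu^{1/4}$ and $\varphi=1$. Hence your first bullet alone absorbs all of it, pointwise, into $\beta\nu^{1/3}\|U'f\varphi B^{-2/3}W\|_{L^2}^2$. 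The paper instead bounds it by $2\varsigma\nu^{1/3}\|f\varphi B^{1/3}W\|_{L^2}^2$ and applies \eqref{spectral}; either works.

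What actually needs the interpolation inequality is an error term you filed under Stage 2. Your claim there, that all errors are absorbed by the good terms once $\beta$ is small, is false. In the time derivative of the cross term, integrating $\nu\partial_y^3f$ by parts against $U'f$ produces $-\beta\nu^{4/3}\Re\int iU''f\,\overline{\partial_y^2f}\,\varphi^2B^{-4/3}W^2\,dy$. After spending part of the $\partial_y^2f$-dissipation, what remains is a multiple of $\beta^{3/2}\nu^{1/3}\|U''\|_{L^\infty}^2\|f\varphi B^{1/3}W\|_{L^2}^2$. Near a critical point this is of size $\beta^{3/2}\nu^{1/2}\varphi^2|f|^2W^2$ and carries no factor of $U'$. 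So neither $\nu\|\partial_yfW\|_{L^2}^2$ nor the $U'$-weighted coercive term controls it pointwise there.

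You must pass this term through \eqref{spectral}, i.e. your interpolation inequality multiplied by $\varphi^2$. That step is harmless, since $\varphi=\min\{1,\nu^{1/2}t\}$ does not depend on $y$ where $B\equiv\nu^{1/4}$. Then $\beta$ must be chosen small relative to $\mathfrak C(U)$ and $\|U''\|_{L^\infty}$.

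The same inequality, combined with Lemma~\ref{lem:eqv}, converts the dissipation into $\delta\nu^{1/2}\Phi$ for $t\ge\nu^{-1/2}$. Note that only $\Phi\ge\|fW\|_{L^2}^2$ holds, not $\Phi\approx\|fW\|_{L^2}^2$, so the comparison must go through Lemma~\ref{lem:eqv}.

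Finally, the $\partial_t\varphi$-terms cannot go into a residual bounded by $\delta\nu^{1/2}\Phi$, because near $t=0$ the functional $\Phi$ does not control $\partial_yf$. They must be absorbed by $\nu\|\partial_yfW\|_{L^2}^2$ and the coercive term. This gives $\frac{d}{dt}\Phi\le0$ on $[0,\nu^{-1/2}]$, and there the $e^{\delta}$ loss is simply $1\le e^{\delta}e^{-\delta\nu^{1/2}t}$.
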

In the main theorem, we observe that the global enhanced dissipation rate is still $\mathcal{O}(\nu^{\f12})$, which is consistent with the result in \cite{CotiZelatiDrivas19}. However, thanks to the presence of the spatial-temporal  weight $W$ \eqref{W}, the solution decays with an improved rate $\mathcal{O}(\nu^{\f{1}{3}}\max\{|U'|,\nu^{\f14}\}^{\f23})$ along streamlines which are beyond the critical layer (Figure \ref{fig:Crit_Layer}). In particular, the estimate recovers the quantitative bound \eqref{ED_mono} in the monotone region. We conjecture that the estimate is sharp in the transition layer. 

Finally, we note that the enhanced dissipation has found applications in various areas, ranging from fluid mechanics (\cite{BMV14,BVW16,BGM15I,BGM15II,BGM15III,ChenLiWeiZhang18,BedrossianHe19,BedrossianHeIyerWang23,CotiZelatiElgindiWidmayer20,WeiZhang19,BeekieHe26}) to kinetic theory (\cite{Bedrossian17,ChaturvediLukNguyen23,BedrossianCotiZelatiDolce22}) and mathematical biology (\cite{BedrossianHe16,KiselevXu15,IyerXuZlatos, HeTadmor172,HeTadmorZlatos,He, He22, HeKiselev21,GongHeKiselev21,AlbrittonOhm22,CotiZelatiDietertGerardVaret22,CotiZelatiDietertVaret24,HuKiselevYao23,GuHe25,He25_I,He25_II}). In many of these works (\cite{BedrossianHe16,BedrossianCotiZelatiDolce22,LiZhao21}), the application of the hypocoercivity functional enables one to treat challenging nonlinear problems. Consequently, we believe that the hypocoercity framework presented in this paper will have future applications.
    \section{Proof of the Main Theorem}
\begin{lemma}\label{lem:eqv}
    The following equivalence relation concerning the functional $\Phi$%\mathcal{G}$ \eqref{hyp_fnctnl} holds
    \begin{align}\label{equiv_ndg}
       \Phi[f]\approx \|fW\|_{L^2}^2+ \beta^\f12  \nu^{\f23}\|\sqrt{\varphi} B^{-\f{1}{3}}\pa_y f W\|_{L^2}^2+\beta^\f32\|\varphi^\f32 U'f B^{-1}W\|_{L^2}^2.
       %\mathcal{G}[f] \leq d^2\|f\|_2^2 + \frac{3}{2}\lf(\al\phi\ep^{1/2}\|\pa_{y}f\|_2^2 + \gamma\phi^3\ep^{-1/2}\|\cos(y)f\|_2^2\rg).
    \end{align}
\end{lemma}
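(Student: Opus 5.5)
The plan is to treat the only indefinite term, the $\beta$ cross term, by Cauchy--Schwarz and Young's inequality. We will show it is bounded by half of the sum of the $\alpha$-term and the $\gamma$-term. The weights are arranged precisely so that the powers of $\nu$, $B$ and $\varphi$ split correctly. Once that is done, both directions of \eqref{equiv_ndg} follow with explicit constants, independent of $\nu$, $t$ and $\beta\in(0,1]$.

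\textbf{Splitting the cross term.} Write the cross term as
\begin{align}
\beta\nu^{\f13}\Re\lan \varphi^2 B^{-\f43} iU' f,\pa_y fW^2\ran
=\beta\nu^{\f13}\Re\int \bigl(\varphi^{\f32}U'fB^{-1}W\bigr)\,\overline{\bigl(\varphi^{\f12}B^{-\f13}\pa_y fW\bigr)}\,i\,dy .
\end{align}
The bookkeeping checks: $\varphi^{\f32}\cdot\varphi^{\f12}=\varphi^2$, $B^{-1}\cdot B^{-\f13}=B^{-\f43}$, and the two factors of $W$ combine to $W^2$. Since $W$, $\varphi$, $B$ are real, Cauchy--Schwarz gives
\begin{align}
\Bigl|\beta\nu^{\f13}\Re\lan \cdots\ran\Bigr|\le \beta\nu^{\f13}\,\|\varphi^{\f32}U'fB^{-1}W\|_{L^2}\,\|\sqrt{\varphi}B^{-\f13}\pa_y fW\|_{L^2}.
\end{align}

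\textbf{Young's inequality.} Write $\beta\nu^{\f13}=(\beta^{\f34})(\beta^{\f14}\nu^{\f13})$ and apply $ab\le \f{\epsilon}{2}a^2+\f{1}{2\epsilon}b^2$ with the factors grouped as
\begin{align}
a=\beta^{\f34}\|\varphi^{\f32}U'fB^{-1}W\|_{L^2},\qquad b=\beta^{\f14}\nu^{\f13}\|\sqrt{\varphi}B^{-\f13}\pa_y fW\|_{L^2}.
\end{align}
Then $a^2=\beta^{\f32}\|\cdot\|^2$, which is the $\gamma$-term divided by $4$. Also $b^2=\beta^{\f12}\nu^{\f23}\|\cdot\|^2$, which is $4$ times the $\alpha$-term.

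Take $\epsilon=4$. This yields
\begin{align}
|\text{cross}|\le 2\beta^{\f32}\|\varphi^{\f32}U'fB^{-1}W\|_{L^2}^2+\tfrac18\beta^{\f12}\nu^{\f23}\|\sqrt{\varphi}B^{-\f13}\pa_y fW\|_{L^2}^2 .
\end{align}
This is exactly half of the $\gamma$-term ($4\beta^{\f32}$) and half of the $\alpha$-term ($\f14\beta^{\f12}$). This is why the paper chooses $\alpha=\f14\beta^{\f12}$ and $\gamma=4\beta^{\f32}$: $\beta^2\le 4\alpha\gamma$ holds with room to spare.

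\textbf{Both directions.} Combining, we get
\begin{align}
\|fW\|^2+\tfrac18\beta^{\f12}\nu^{\f23}\|\sqrt{\varphi}B^{-\f13}\pa_yfW\|^2+2\beta^{\f32}\|\varphi^{\f32}U'fB^{-1}W\|^2\le\Phi[f],
\end{align}
\begin{align}
\Phi[f]\le \|fW\|^2+\tfrac38\beta^{\f12}\nu^{\f23}\|\sqrt{\varphi}B^{-\f13}\pa_yfW\|^2+6\beta^{\f32}\|\varphi^{\f32}U'fB^{-1}W\|^2 .
\end{align}
Together these give \eqref{equiv_ndg}, with absolute constants (for example $1/8$ and $6$). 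The lower bound also shows $\|fW\|_{L^2}^2\le\Phi[f]$, which is the first inequality in \eqref{SL_ED}.

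The only step needing care is the weight bookkeeping in the first display. In particular, $\varphi$ and $B$ must be used pointwise, not pulled out of the integrals, since they depend on $y$. No properties of $W$ from Lemma \ref{lem:W_est} are needed, since $W$ simply enters as a common real factor.
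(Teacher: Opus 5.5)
Your proof is correct and is essentially the paper's argument: the paper likewise bounds the cross term via Cauchy--Schwarz and Young by $\frac{1}{8}\beta^{1/2}\nu^{2/3}\|\sqrt{\varphi}B^{-1/3}\partial_y f W\|_{L^2}^2+2\beta^{3/2}\|\varphi^{3/2}U'fB^{-1}W\|_{L^2}^2$, which is exactly half of the $\alpha$- and $\gamma$-terms, with the same constants you obtained. Your explicit two-sided bounds are simply the written-out consequence of that observation.
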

\begin{proof}
This is a consequence of the observation
\begin{align}
    \beta\nu^{\f{1}{3}}|\Re\lan \varphi^2 B^{-\f43} iU' f,\pa_y fW^2\ran|\leq {\frac{1}{8}\beta^\f12} \nu^{\f23}\|\sqrt{\varphi} B^{-\f{1}{3}}\pa_y f W\|_{L^2}^2+{2\beta^\f32}\|\varphi^\f32 U'f B^{-1}W\|_{L^2}^2.
\end{align}
    % We recall the definition of $\mathcal{G}$ \eqref{hyp_fnctnl}, and estimate $\mathcal{G}[f]$ using H\"older inequality and Young's inequality,
    % \begin{align}
    %     \mathcal{G}[f] \leq& d^2\|f\|_2^2 + \al\phi\ep^{1/2}\|\pa_{y}f\|_2^2 + \beta\phi^2\|\cos(y)f\|_2\|\pa_{y}f\|_2 + \gamma\phi^3\ep^{-1/2}\|\cos(y)f\|_2^2\\
    %     \leq& d^2\|f\|_2^2 + \frac{3\al}{2}\phi\ep^{1/2}\|\pa_{y}f\|_2^2 + \lf(\gamma + \frac{\beta^2}{2\al}\rg)\phi^3\ep^{-1/2}\|\cos(y)f\|_2^2.
    % \end{align}
    % Similarly, we have the lower bound,
    % \begin{align}
    %     \mathcal{G}[f] \geq d^2 \|f\|_2^2 + \frac{\al}{2}\phi\ep^{1/2}\|\pa_{y}f\|_2^2 + \lf(\gamma - \frac{\beta^2}{2\al}\rg)\phi^3\ep^{-1/2}\|\cos(y)f\|_2^2.
    % \end{align}
    % Since \eqref{ndeg_bnd_req} implies that $\frac{\beta^2}{2\al} \leq \frac{\gamma}{2}$, we obtain that
    % \begin{align}
    %    d^2 \|f\|_2^2 + \frac{1}{2}\al\phi\ep^{1/2}\|\pa_{y}f\|_2^2 + \frac{1}{2}\gamma\phi^3\ep^{-1/2}\|\cos(y)f\|_2^2 \leq \mathcal{G}[f] \leq d^2\|f\|_2^2 + \frac{3}{2}\al\phi\ep^{1/2}\|\pa_{y}f\|_2^2 + \frac{3}{2}\gamma\phi^3\ep^{-1/2}\|\cos(y)f\|_2^2.
    % \end{align}
    This concludes the proof of the lemma.
\end{proof}
By taking the time derivative of the hypocoercivity functional, \eqref{Hypo_F}, we end up with the following decomposition:%\mathcal{G}
\begin{align}\begin{split}
    \frac{d}{dt}\Phi[f(t)] =& \frac{d}{dt}\|fW\|_{L^2}^2 + \frac{d}{dt}\lf(\frac{1}{4}\beta^\f12 \nu^{\f23}\|\sqrt{\varphi} B^{-\f{1}{3}}\pa_y f W\|_{L^2}^2\rg)\\
    &+ \frac{d}{dt}\lf(\beta\nu^{\f{1}{3}}\Re\lan \varphi^2 B^{-\f43} iU' f,\pa_y fW^2\ran\rg) \\
    &+ \frac{d}{dt}\lf(4\beta^\f32\|\varphi^\f32 U'f B^{-1}W\|_{L^2}^2\rg)\\
    =:& T_{L^2} + T_\al+ T_\beta + T_\gamma.
\end{split}\label{ndeg_T_albe_term}
\end{align}
\begin{lemma}\label{lem:L2}
Consider the $T_{L^2}$-term in \eqref{ndeg_T_albe_term}. The following estimate holds
\begin{align}
   T_{L^2}\leq -\f32\nu\|\pa_y fW\|_{L^2}^2+(2\varsigma+ 5\varsigma^2\|U''\|_{L^\infty}^2\min\{\nu t^2,1\})\nu^\f{1}{3}\|f \varphi B^{\f{1}{3}}W\|_{L^2}^2.\label{T_L2}
\end{align}
\end{lemma}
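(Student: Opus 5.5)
The plan is a direct weighted $L^2$ energy computation, followed by pointwise bounds on the space and time derivatives of $\log W$. Using \eqref{EQ:hypo_PS},
\begin{align}
T_{L^2}=2\Re\lan \pa_t f, fW^2\ran+2\lan |f|^2, W\pa_t W\ran .
\end{align}
The transport contribution $2\Re\lan -iUf,fW^2\ran=2\Re\bigl(-i\int U|f|^2W^2\bigr)$ vanishes, because $U$ and $W$ are real. For the diffusion term I integrate by parts:
\begin{align}
2\nu\Re\lan \pa_y^2 f,fW^2\ran=-2\nu\|\pa_y fW\|_{L^2}^2-4\nu\Re\lan \pa_y f W, fW'\ran .
\end{align}
Young's inequality bounds the cross term by $\tfrac12\nu\|\pa_y fW\|_{L^2}^2+8\nu\|fW'\|_{L^2}^2$. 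This gives
\begin{align}
T_{L^2}\leq -\tfrac32\nu\|\pa_y fW\|_{L^2}^2+8\nu\|f W (\pa_y\log W)\|_{L^2}^2+2\|fW\sqrt{\pa_t\log W}\|_{L^2}^2 .
\end{align}
It remains to compare the last two terms with the weight $\nu^{1/3}\varphi^2B^{2/3}$ appearing in \eqref{T_L2}.

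\textbf{Time derivative.} By \eqref{W}, $\log W=\varsigma\nu^{1/3}B^{2/3}\max\{\nu^{-1/3}B^{-2/3},\min\{t,\nu^{-1/2}\}\}$. Its $t$-derivative is nonzero only where $t\ge\nu^{-1/3}B^{-2/3}$ and $t<\nu^{-1/2}$, and there it equals $\varsigma\nu^{1/3}B^{2/3}$. In that same region $\nu^{1/3}B^{2/3}t\geq1$, so $\varphi=1$ by \eqref{varphi}. Hence $0\le\pa_t\log W\leq \varsigma\nu^{1/3}B^{2/3}\varphi^2$ everywhere, and the last term is at most $2\varsigma\nu^{1/3}\|f\varphi B^{1/3}W\|_{L^2}^2$.

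\textbf{Space derivative.} Where the maximum in \eqref{W} is attained by the first entry, $\log W\equiv\varsigma$ locally and $\pa_y\log W=0$. Otherwise $\log W=\varsigma\nu^{1/3}B^{2/3}\min\{t,\nu^{-1/2}\}$. The function $B$ is Lipschitz, with $|B'|\le|U''|$ a.e., since $B'=\pm U''$ where $|U'|>\nu^{1/4}$ and $B'=0$ elsewhere. Therefore
\begin{align}
|\pa_y\log W|\leq \tfrac23\varsigma\nu^{1/3}B^{-1/3}|U''|\min\{t,\nu^{-1/2}\},
\end{align}
and this holds only in the region $\nu^{1/3}B^{2/3}t\ge1$, where again $\varphi=1$. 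It follows that
\begin{align}
8\nu|\pa_y\log W|^2\leq \tfrac{32}{9}\varsigma^2\|U''\|_{L^\infty}^2\,\nu^{5/3}B^{-2/3}\min\{t^2,\nu^{-1}\}.
\end{align}
The target is $5\varsigma^2\|U''\|_{L^\infty}^2\min\{\nu t^2,1\}\,\nu^{1/3}B^{2/3}\varphi^2$, which with $\varphi=1$ equals $5\varsigma^2\|U''\|_{L^\infty}^2\,\nu^{4/3}B^{2/3}\min\{t^2,\nu^{-1}\}$. The comparison therefore reduces to $\nu^{1/3}B^{-2/3}\le B^{2/3}$, i.e.\ $B\ge\nu^{1/4}$, which holds by \eqref{B}. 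Since also $\tfrac{32}{9}<5$, combining the two estimates yields \eqref{T_L2}.

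\textbf{Main obstacle.} The delicate point is the regularity of the nested max/min in $W$. The map $(t,y)\mapsto\log W$ is only Lipschitz, so I would justify the computation of $\frac{d}{dt}\|fW\|^2$ and the integration by parts with a.e.\ derivatives. The identity $\varphi=1$ must be checked on exactly the set where each derivative is nonzero. On the interfaces $t=\nu^{-1/3}B^{-2/3}(y)$ no boundary terms arise: $W$ is continuous across them, so only the a.e.\ derivative matters. If needed, this can be made rigorous by mollifying the max/min and passing to the limit.
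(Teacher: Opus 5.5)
Your proof is correct and follows the paper's own argument essentially step by step: the weighted energy identity, the pointwise a.e.\ bounds on $\partial_t W$ and $\partial_y W$ (the content of Lemma~\ref{lem:W_est}), Young's inequality on the cross term, and $B\ge\nu^{1/4}$ to convert $\nu^{5/3}B^{-2/3}$ into $\nu^{4/3}B^{2/3}$. You arrive at the same constant $\tfrac{32}{9}\le 5$, and you make explicit the fact the paper uses implicitly: both derivative terms are supported on $\{\nu^{1/3}B^{2/3}t\ge 1\}$, where $\varphi=1$.
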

\begin{proof}
    
We invoke the $W$-estimates \eqref{W_est_t} and \eqref{W_est_y}, and the crucial fact that $\varphi=\min\{\nu^\f{1}{3}B^{\f23}t,1\}=1$ for $t\geq\nu^{-\f{1}{3}}B^{-\f23}$ to obtain that 
\begin{align}
    \begin{split}T_{L^2}\leq&-2\nu\|\pa_y fW\|_{L^2}^2+2\varsigma\nu^\f{1}{3}\int|f|^2 B^{\f23}W^2\mathbbm{1}_{t\in[\nu^{-\f{1}{3}}B^{-\f23}, \nu^{-\f12}]}dy+2\nu\Re \int \overline{f}\pa_y f\pa_y(W^2)dy\\
    \leq&-2\nu\|\pa_y fW\|_{L^2}^2+2\varsigma\nu^\f{1}{3}\|f \varphi B^{\f{1}{3}}W\|_{L^2}^2+\frac{8}{3}\nu\varsigma\nu^{\f{1}{3}}\min\{t,\nu^{-\f12}\}\int \mathbbm{1}_{t\geq\nu^{-\f{1}{3}}B^{-\f23}}|\pa_y f||f| |B'|B^{-\f{1}{3}} W^2dy\\
    \leq &-\f32\nu\|\pa_y fW\|_{L^2}^2+2\varsigma\nu^\f{1}{3}\|f \varphi B^{\f{1}{3}}W\|_{L^2}^2+5\varsigma^2\nu^\f43\|U''\|_{L^\infty}^2\min\{t^2,\nu^{-1}\}\|B^{-\f43}\|_{L^\infty}\nu^\f{1}{3}\|f \varphi B^{\f{1}{3}}W\|_{L^2}^2\\
    \leq &-\f32\nu\|\pa_y fW\|_{L^2}^2+(2\varsigma+ 5\varsigma^2\|U''\|_{L^\infty}^2\min\{\nu t^2,1\})\nu^\f{1}{3}\|f \varphi B^{\f{1}{3}}W\|_{L^2}^2. \end{split}
\end{align}

\end{proof}
The estimates for the $T_\al,\ T_\beta$, and $T_\gamma$ terms are tricky, and we collect them in the following technical lemmas whose proofs will be postponed to the end of this section.
\begin{lemma}[$\al$-estimate]\label{lem:nondegenerate al}
	The following estimate holds:
	\begin{align}\label{nondegenerate al estimate}
            T_\al \leq &\lf(\frac{1}{2}+\frac{3}{4}\beta^\f12+2\beta^{\f12}\|U''\|_{L^\infty}^2+4\varsigma^2\beta^\f12\|U''\|_{L^\infty}^2\min\{1,\nu t^2\}\rg)\nu\|\pa_{y}f W\|_2^2 \\
            &- \f{5}{12}\beta^\f12\nu^{\f53} \|\pa_y^2f \sqrt{\varphi} B^{-\f{1}{3}}W\|_2^2 +\frac{1}{8}\beta\nu^{\f{1}{3}}\|U' f \varphi B^{-\f23}W\|_{L^2}^2.
            \end{align} 
\end{lemma}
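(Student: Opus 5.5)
We differentiate $\frac14\beta^{1/2}\nu^{2/3}\int \varphi B^{-2/3}|\pa_y f|^2W^2\,dy$ in time and sort the resulting terms into three groups: the time derivatives of the weights, the transport term, and the diffusion term.

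\emph{Time derivatives of the weights.} Since $\varphi$ and $W$ depend on $t$, we get
\begin{align}
\tfrac14\beta^{1/2}\nu^{2/3}\int(\pa_t\varphi+2\varphi W^{-1}\pa_tW)B^{-2/3}|\pa_yf|^2W^2\,dy .
\end{align}
From \eqref{varphi}, $\pa_t\varphi\le \nu^{1/3}B^{2/3}$. From \eqref{W_est_t}, $W^{-1}\pa_t W\le\varsigma\nu^{1/3}B^{2/3}$. Both contributions are therefore bounded by $\frac14\beta^{1/2}(1+2\varsigma)\nu\|\pa_yfW\|_2^2$, which fits inside the $\frac34\beta^{1/2}\nu\|\pa_yfW\|_2^2$ allowance.

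\emph{Transport term.} Using $\pa_t\pa_y f=-iU\pa_yf-iU'f+\nu\pa_y^3f$, the $iU\pa_y f$ piece is purely imaginary after pairing and drops out of the real part. The $iU'f$ piece gives
\begin{align}
\tfrac12\beta^{1/2}\nu^{2/3}\,\bigl|\Re\langle \varphi B^{-2/3}iU'f,\pa_yfW^2\rangle\bigr| .
\end{align}
Bound it by Cauchy--Schwarz after splitting the weights as $(\varphi B^{-2/3}U'f)(\nu^{1/3}\pa_yf)$, and apply Young with parameter $\beta^{1/2}$. This yields $\frac18\beta\nu^{1/3}\|U'f\varphi B^{-2/3}W\|_2^2+\frac12\nu\|\pa_yfW\|_2^2$, which is exactly the source of the $\frac12$ and $\frac18$ coefficients.

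\emph{Diffusion term.} We integrate $\Re\langle \pa_y^3 f,\varphi B^{-2/3}W^2\pa_yf\rangle$ by parts once. This gives the good term $-\frac12\beta^{1/2}\nu^{5/3}\|\pa_y^2f\sqrt\varphi B^{-1/3}W\|_2^2$ plus a commutator
\begin{align}
-\tfrac12\beta^{1/2}\nu^{5/3}\Re\int\pa_y^2f\,\overline{\pa_yf}\,\pa_y(\varphi B^{-2/3}W^2)\,dy .
\end{align}
We treat each derivative in this commutator separately.
\begin{itemize}
\item $\pa_y(B^{-2/3})$. We have $|\pa_y B^{-2/3}|\lesssim|U''|B^{-5/3}$, and this is supported where $B=|U'|$. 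After Young, spending $\frac1{24}$ of the good term, the remainder is $\lesssim\beta^{1/2}\nu^{5/3}\|U''\|_\infty^2\int\varphi B^{-3}|\pa_yf|^2W^2$. Since $B\ge\nu^{1/4}$, we have $\nu^{2/3}B^{-3}\le\nu^{2/3-3/4}$, which is not bounded. We therefore must use $\varphi\le \nu^{1/3}B^{2/3}t$ together with $\varphi\le1$ more carefully, or instead integrate by parts differently. My first try is to pair $\nu^{2/3}\varphi B^{-8/3}\le \nu^{2/3}B^{-2}\nu^{1/3}\cdots$. If this fails, I will instead move the derivative onto $\overline{\pa_y f}$, symmetrize via $\Re(\pa_y^2f\,\overline{\pa_yf})=\frac12\pa_y|\pa_yf|^2$, and integrate by parts again. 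The weight then carries two derivatives, and $|\pa_y^2 B^{-2/3}|$ is controlled by $\|U''\|^2$ and $\|U'''\|$-type terms times $B^{-8/3}$, again with the critical-layer cutoff. This step, producing exactly the $2\beta^{1/2}\|U''\|_\infty^2\nu\|\pa_yfW\|^2$ coefficient, is the main obstacle. I expect it to close because $\nu^{2/3}B^{-4/3}\cdot\nu B^{...}$ balances at $B=\nu^{1/4}$, and because $\pa_yB$ vanishes inside the critical layer, where $B\equiv\nu^{1/4}$.
\item $\pa_y\varphi$. Here $|\pa_y\varphi|\lesssim \nu^{1/3}t|U''|B^{-1/3}\mathbbm 1_{\varphi<1}$, handled in the same way.
\item $\pa_y W^2$. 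Use \eqref{W_est_y}, $|\pa_yW|\le\frac43\varsigma\nu^{1/3}\min\{t,\nu^{-1/2}\}|U''|B^{-1/3}W$. Young then gives $4\varsigma^2\beta^{1/2}\|U''\|_\infty^2\min\{1,\nu t^2\}\nu\|\pa_yfW\|_2^2$, using $\nu^{2/3}\cdot\nu^{2/3}t^2B^{-4/3}\varphi\le\min\{\nu t^2,1\}$ from $B\ge\nu^{1/4}$, exactly as in Lemma \ref{lem:L2}.
\end{itemize}

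The remaining $\frac1{12}$ of the good term absorbs the Young losses, leaving the stated $-\frac5{12}$.
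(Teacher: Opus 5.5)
\textbf{There is a gap: the proposal leaves the $\partial_y(B^{-2/3})$ commutator unresolved because of an exponent error.} Your decomposition is the same as the paper's. The time derivatives of $\varphi$ and $W$ give $\alpha(1+2\varsigma)\nu\|\partial_y fW\|_2^2$. The $-iU'f$ transport term, handled by Young, gives the $\frac12$ and $\frac18\beta$ coefficients. The diffusion term is integrated by parts once, leaving commutators from $\partial_y\varphi$, $\partial_y(B^{-2/3})$ and $\partial_y(W^2)$. The first two groups and the $\partial_y(W^2)$ commutator are handled correctly.

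You call the $\partial_y(B^{-2/3})$ commutator the main obstacle, but the difficulty comes only from the wrong exponent. Pair $|\partial_y^2 f|\sqrt{\varphi}B^{-1/3}W$ against $|\partial_y f||U''|\sqrt{\varphi}B^{-4/3}W$. The remainder weight is then $B^{-8/3}$, not $B^{-3}$:
\begin{align}
\frac43\alpha\nu^{5/3}\int|\partial_y^2f||\partial_yf||U''|\varphi B^{-5/3}W^2\,dy\le\frac19\alpha\nu^{5/3}\|\partial_y^2f\sqrt{\varphi}B^{-1/3}W\|_2^2+4\alpha\|U''\|_{L^\infty}^2\nu^{5/3}\int\varphi B^{-8/3}|\partial_yf|^2W^2\,dy .
\end{align}
Since $B\ge\nu^{1/4}$ and $\varphi\le 1$, we have $\nu^{5/3}B^{-8/3}\le\nu$. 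So your ``first try'' works as it stands, and this is exactly the paper's argument.

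Drop the fallback of integrating by parts a second time. $B=\max\{|U'|,\nu^{1/4}\}$ is only Lipschitz, and $B'$ jumps across $\{|U'|=\nu^{1/4}\}$. Hence $\partial_y^2(B^{-2/3})$ contains point masses. That route would also bring in $U'''$, which does not appear in the statement.

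The $\partial_y\varphi$ commutator, which you say is ``handled in the same way,'' also needs one observation you do not state. To absorb it into the good term you need a factor $\sqrt{\varphi}$ on $\partial_y^2 f$. This factor comes from the identity $\nu^{1/3}tB^{-1/3}=\varphi B^{-1}$ on $\{\varphi<1\}$, which is the support of $\partial_y\varphi$. Thus $|\partial_y\varphi|\le\frac23|U''|\varphi B^{-1}$, and the term reduces to the same $B^{-8/3}$ estimate as above.

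For the constants, spend $\frac19\alpha=\frac1{36}\beta^{1/2}$ of the good term on each of the three commutators.
\begin{itemize}
\item The $\partial_y\varphi$ and $\partial_y(B^{-2/3})$ commutators each contribute $4\alpha\|U''\|_{L^\infty}^2=\beta^{1/2}\|U''\|_{L^\infty}^2$. Together they give the stated $2\beta^{1/2}\|U''\|_{L^\infty}^2$.
\item The $W$-commutator gives $16\varsigma^2\alpha=4\varsigma^2\beta^{1/2}$.
\item The good term is left with $-2\alpha+\frac13\alpha=-\frac5{12}\beta^{1/2}$.
\end{itemize}
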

\begin{lemma}[$\beta$-estimate]\label{lem:nondegenerate beta}
	The following estimate holds
	\begin{align}\label{nondegenerate beta estimate}\begin{split}
         T_\beta \leq& -\beta\lf(\f12-32\sqrt{\beta}-64\sqrt{\beta}\|U''\|_{L^\infty}^2\rg)\nu^\f{1}{3}\|U' f \varphi B^{-\f23}W\|_{L^2}^2+  8\beta \nu \|\pa_{y}f W\|_{L^2}^2\\ &+\mathhlr{\frac{5}{12}}\beta^{\f12}\nu^{\f53} \|\pa_{y}^2f\sqrt{\varphi} B^{-\f{1}{3}}W\|_2^2 +  5\beta^\f32\nu\|U'\pa_y f\varphi^\f32B^{-1}W\|_2^2\\
        &+12\beta^{\f32}\nu^{\f{1}{3}}\|U''\|_{L^\infty}^2\|f\varphi B^{\f{1}{3}}W\|_{L^2}^2.\end{split}
	\end{align}
\end{lemma}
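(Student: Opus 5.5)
We differentiate the cross term directly. Write $K:=\varphi^2B^{-\f43}U'W^2$, a real weight, so that $T_\beta=\beta\nu^{\f13}\frac{d}{dt}\Re\lan iKf,\pa_y f\ran$. The time derivative splits into three pieces:
\begin{align}
T_\beta=\beta\nu^{\f13}\Re\lan i(\pa_tK)f,\pa_yf\ran+\beta\nu^{\f13}\Re\lan iK\pa_tf,\pa_yf\ran+\beta\nu^{\f13}\Re\lan iKf,\pa_y\pa_tf\ran .
\end{align}
In the last term we integrate by parts in $y$, so that $\pa_t f$ appears undifferentiated. This produces $-\Re\lan iK\pa_yf,\pa_tf\ran-\Re\lan iK'f,\pa_tf\ran$. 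We then substitute $\pa_tf=-iUf+\nu\pa_y^2f$.

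\textbf{Transport part (the good term).} Inserting $-iUf$, the two pieces with $K\pa_yf$ combine as in the classical computation of \cite{BCZ15}. The derivative falls on $U$ and leaves
\begin{align}
-\beta\nu^{\f13}\int K U'|f|^2\,dy=-\beta\nu^{\f13}\|U'f\varphi B^{-\f23}W\|_{L^2}^2 .
\end{align}
The remaining piece $\Re\lan iK'f,iUf\ran$ is purely real with the wrong phase, so it vanishes. Equivalently, one can write everything as $\Re\lan iK f,\pa_y(\cdot)\ran$ and check symmetries. This gives the leading coercive term with coefficient $1$, and I will keep half of it.

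\textbf{Diffusion part.} The $\nu\pa_y^2f$ contributions are
\begin{align}
\beta\nu^{\f43}\Re\bigl[\lan iKf'',f'\ran-\lan iKf',f''\ran-\lan iK'f,f''\ran\bigr].
\end{align}
The first two combine to $2\beta\nu^{\f43}\Re\lan iKf'',f'\ran$ up to sign. I bound this by Cauchy--Schwarz as
\begin{align}
\beta\nu^{\f43}\|\sqrt{\varphi}B^{-\f13}f''W\|_{L^2}\,\|\varphi^{\f32}B^{-1}U'f'W\|_{L^2}.
\end{align}
I split it via Young's inequality into $\frac{5}{12}\beta^{\f12}\nu^{\f53}\|f''\sqrt\varphi B^{-\f13}W\|^2$ and $C\beta^{\f32}\nu\|U'f'\varphi^{\f32}B^{-1}W\|^2$. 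The powers check out because $\varphi^2B^{-\f43}=\sqrt\varphi B^{-\f13}\cdot\varphi^{\f32}B^{-1}$.

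The term with $K'$ involves $K'=\varphi^2B^{-\f43}U''W^2+U'(\varphi^2B^{-\f43}W^2)'$. I would first integrate by parts once more to move a derivative off $f''$, or bound it directly. The $U''$ piece gives $\beta\nu^{\f43}\|U''\|_\infty\int\varphi^2B^{-\f43}|f||f''|W^2$. Using $\varphi\le1$, $B\ge\nu^{1/4}$ and Young, this is split into the $f''$ dissipation and $\beta^{\f32}\nu^{\f13}\|U''\|^2_\infty\|f\varphi B^{\f13}W\|^2$. The needed bound $\nu^{\f83}B^{-\f83}\varphi^{\f72}\lesssim \nu^{\f13}B^{\f23}\cdot\nu^{\f53}B^{-\f23}$ reduces to $\nu^{\f23}B^{-\f83}\lesssim1$, which follows from $B^{8/3}\ge\nu^{2/3}$. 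This is exactly where the cutoff $\nu^{1/4}$ in \eqref{B} is designed to work.

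The derivatives $\pa_y\varphi$, $\pa_yB$ and $\pa_yW$ are controlled by Lemma \ref{lem:W_est} and by $|B'|\le|U''|$, where $\varphi<1$. Each such factor costs roughly $\nu^{\f13}t\,|U''|B^{-\f13}$ or $B'/B$. These pieces are absorbed into the $\beta^{\f32}$-sized terms and into $8\beta\nu\|\pa_yfW\|^2$, after Young against the coercive term.

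\textbf{Time derivative of the weight.} The remaining term is $\pa_tK=(2\varphi\pa_t\varphi B^{-\f43}U'+\varphi^2B^{-\f43}U'\pa_t)W^2$. Here $\pa_t\varphi=\nu^{\f13}B^{\f23}\mathbbm 1_{\varphi<1}\le\nu^{\f13}B^{\f23}$, and $\pa_tW^2\le2\varsigma\nu^{\f13}B^{\f23}W^2$ by \eqref{W_est_t}. So $|\pa_tK|\lesssim\nu^{\f13}\varphi B^{-\f23}|U'|W^2$, and the term is bounded by
\begin{align}
\beta\nu^{\f23}\int\varphi B^{-\f23}|U'||f||f'|W^2 .
\end{align}
Young's inequality gives $\tfrac{1}{4}\beta\nu^{\f13}\|U'f\varphi B^{-\f23}W\|^2+4\beta\nu\|f'W\|^2$, using $\varphi$ on the first factor only. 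Collecting everything should give the claimed inequality with the $\sqrt\beta$ losses on the coercive term. Those losses come from terms where the $f'$-weight is traded against the good term.

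\textbf{Main obstacle.} I expect the main difficulty to be the bookkeeping of the $K'$ contributions, especially the terms where $\pa_y\varphi$ and $\pa_yW$ hit. There the weights degenerate differently in the critical, transition and monotone regions, and one must verify case by case, via $B\ge\nu^{1/4}$ and $t\le\nu^{-1/3}B^{-2/3}$ on $\{\varphi<1\}$, that the powers of $\nu$ close.
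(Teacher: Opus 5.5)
Your decomposition matches the paper's. Your $\partial_t K$ term is $T_{\beta;1}+T_{\beta;2}$. Your three diffusion pieces, namely $2\beta\nu^{4/3}\Re\langle iK\partial_y^2 f,\partial_y f\rangle$, the $U''$ part of $K'$, and the $U'(\varphi^2B^{-4/3}W^2)'$ part of $K'$, are exactly the diffusion part of $T_{\beta;3}$ together with $T_{\beta;41}$ and $T_{\beta;42}$.

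\textbf{The transport step is wrong as written.} After your integration by parts, the two $K\partial_y f$ pieces are equal and sum to $2\Re\int KUf\overline{\partial_y f}\,dy=\int KU\,\partial_y|f|^2\,dy=-\int (KU)'|f|^2\,dy$. So the derivative falls on $KU$, not only on $U$. Moreover, $\Re\langle iK'f,iUf\rangle=\int K'U|f|^2\,dy$ is real and does not vanish. You reach the correct coercive term $-\beta\nu^{1/3}\|U'f\varphi B^{-2/3}W\|_{L^2}^2$ only because these two mistakes cancel.

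This is not cosmetic. Restricted to the critical layer, where $B=\nu^{1/4}$ and the weights are constant in $y$, the term $\beta\nu^{1/3}\int K'U|f|^2\,dy$ equals $\beta\int UU''\varphi^2W^2|f|^2\,dy$, with no small power of $\nu$. It also shifts by $c\,\beta\nu^{1/3}\int K'|f|^2\,dy$ under $U\mapsto U+c$, $f\mapsto e^{-ict}f$, which leaves every norm in the lemma unchanged. So nothing on the right-hand side of \eqref{nondegenerate beta estimate} can absorb it, and the lemma rests on its exact cancellation.

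The clean route is the paper's, which your ``equivalently'' remark hints at: do not integrate the transport part by parts at all. Write $\partial_y\partial_t f=\nu\partial_y^3 f-iU'f-iU\partial_y f$. Then $-iU'f$ gives the coercive term $T_{\beta;43}$ directly, and $-iU\partial_y f$ gives $T_{\beta;44}$, which cancels the transport part of $T_{\beta;3}$. Only the $\nu\partial_y^3 f$ piece is integrated by parts.

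\textbf{The step you flag as the main obstacle is also misdescribed.} The weight-derivative part of $K'$ has integrand $|U'f||\partial_y^2 f|$, so Young pairs $\partial_y^2 f$ with $U'f$. It uses part of the $\frac{5}{12}$ dissipation and produces the $\sqrt\beta$-losses on the coercive term, but it never feeds $8\beta\nu\|\partial_y fW\|_{L^2}^2$.

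That term comes only from $\partial_t K$. There $|\partial_tK|\le 2(1+\varsigma)\nu^{1/3}\varphi B^{-2/3}|U'|W^2$; you dropped this factor. Young then gives $\frac{1+\varsigma}{4}\beta\nu^{1/3}\|U'f\varphi B^{-2/3}W\|_{L^2}^2+4(1+\varsigma)\beta\nu\|\partial_y fW\|_{L^2}^2$, which is where the $\frac12$ and the $8\beta$ come from.

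No case analysis is needed for $K'$. Use $|\partial_y\varphi^2|\le\frac43\varphi^2|B'|B^{-1}$, $|\partial_y B^{-4/3}|=\frac43|B'|B^{-7/3}$, $|B'|\le\|U''\|_{L^\infty}$ and \eqref{W_est_y}. Everything then closes uniformly via $\nu^{2/3}\varphi B^{-8/3}\le 1$ and $\nu^{4/3}B^{-4/3}\min\{t,\nu^{-1/2}\}^2\le\min\{1,\nu t^2\}$, both consequences of $B\ge\nu^{1/4}$.

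Do not integrate by parts a second time. $B$, $\varphi$ and $W$ are only Lipschitz in $y$, with kinks where $|U'|=\nu^{1/4}$ and where $\nu^{1/3}B^{2/3}t=1$, so $K''$ carries point masses.

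Finally, you cannot spend the whole $\frac5{12}$ on $2\beta\nu^{4/3}\Re\langle iK\partial_y^2f,\partial_yf\rangle$. Splitting it as $\frac14\beta^{1/2}\nu^{5/3}\|\partial_y^2f\sqrt\varphi B^{-1/3}W\|_{L^2}^2+4\beta^{3/2}\nu\|U'\partial_yf\varphi^{3/2}B^{-1}W\|_{L^2}^2$ leaves $\frac16$ of the dissipation for the $U''$ piece and the weight-derivative piece.
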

\begin{lemma}[$\gamma$-estimate]\label{lem:nondegenerate gamma}
	The following estimate holds,
	\begin{align}\label{nondegenerate gamma estimate}
        \begin{split}
            T_{\gamma}
        \leq& -8\beta^\f32\nu\|U'\pa_{y}f\varphi^\f32 B^{-1}W\|_{L^2}^2+(48+8\|U''\|_{L^\infty}^2)\beta\nu\|\pa_y f W\|_{L^2}^2\\
        &+ \lf[28+\lf(8+\f{16}{9}\varsigma^2\min\{1, \nu t^2\}\rg)\|U''\|_{L^\infty}^2\rg]\beta^\f32\nu^\f{1}{3}\|U' f\varphi B^{-\f23}W\|_{L^2}^2.  
        \end{split}
	\end{align}
\end{lemma}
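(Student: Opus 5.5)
The plan is to differentiate $\|\varphi^{3/2}U'fB^{-1}W\|_{L^2}^2=\int \varphi^3|U'|^2B^{-2}|f|^2W^2\,dy$ directly and sort the result into four groups:
\begin{itemize}
\item[(i)] terms where $\partial_t$ falls on $\varphi^3$;
\item[(ii)] terms where $\partial_t$ falls on $W^2$;
\item[(iii)] the transport term from $\partial_t f=-iUf+\nu\partial_y^2 f$;
\item[(iv)] the diffusion term.
\end{itemize}
Each group is then compared with one of the three quantities on the right of \eqref{nondegenerate gamma estimate}. Multiplying by $4\beta^{3/2}$ gives the claimed constants.

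Group (iii) vanishes, because $\Re\int \overline{f}\,(-iUf)\,\varphi^3|U'|^2B^{-2}W^2\,dy=0$ since the weight is real.

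For group (i), I use that $\partial_t\varphi=\nu^{1/3}B^{2/3}\mathbbm{1}_{\varphi<1}$. This gives $3\varphi^2\nu^{1/3}B^{2/3}|U'|^2B^{-2}|f|^2W^2\le 3\nu^{1/3}|U'f\varphi B^{-2/3}W|^2$, which is exactly the norm appearing in the last line. For group (ii), the time estimate \eqref{W_est_t} gives $\partial_t W^2\le 2\varsigma\nu^{1/3}B^{2/3}W^2$, supported where $\varphi=1$. Since $\varphi^3\le\varphi^2$, this contributes at most $2\varsigma\nu^{1/3}\|U'f\varphi B^{-2/3}W\|^2$. Together these yield a coefficient of order $4\beta^{3/2}(3+2\varsigma)$, which sits inside the $28\beta^{3/2}$ budget.

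Group (iv) is the main term. Integrating by parts gives
\begin{align}
-2\nu\|U'\partial_y f\varphi^{3/2}B^{-1}W\|^2-2\nu\Re\int \overline{f}\,\partial_y f\,\partial_y\bigl(\varphi^3|U'|^2B^{-2}W^2\bigr)dy .
\end{align}
The first piece is the good term; after multiplying by $4\beta^{3/2}$ it is exactly $-8\beta^{3/2}\nu\|U'\partial_yf\varphi^{3/2}B^{-1}W\|^2$. The statement keeps this term intact, so none of it may be spent. The cross terms must therefore be handled by Young's inequality against $\nu\|\partial_y fW\|^2$ (which carries the prefactor $\beta$) and against $\beta^{3/2}\nu^{1/3}\|U'f\varphi B^{-2/3}W\|^2$.

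The $y$-derivative falls in four ways:
\begin{itemize}
\item[(a)] On $|U'|^2$, giving $2U'U''$.
\item[(b)] On $B^{-2}$, giving $-2B'B^{-3}$, with $|B'|\le|U''|$ a.e. and $B'=0$ inside the critical layer.
\item[(c)] On $\varphi^3$. Here $\partial_y\varphi=\tfrac23\nu^{1/3}tB^{-1/3}B'\mathbbm{1}_{\varphi<1}$. On the support one has $t\le\nu^{-1/3}B^{-2/3}$, so $|\partial_y\varphi|\lesssim|U''|B^{-1}$.
\item[(d)] On $W^2$, controlled by \eqref{W_est_y}: $|\partial_y W^2|\lesssim\varsigma\nu^{1/3}\min\{t,\nu^{-1/2}\}|B'|B^{-1/3}W^2$.
\end{itemize}
Since $|U'|\le B$, each cross term is bounded by
\begin{align}
C\nu|U''|\,|f|\,|\partial_yf|\,\varphi^{2}|U'|B^{-2}W^2,
\end{align}
with an extra factor $\varsigma\nu^{1/3}\min\{t,\nu^{-1/2}\}B^{2/3}$ in case (d). Young's inequality then splits this as
\begin{align}
\beta^{-1/2}\nu|\partial_yfW|^2+\beta^{1/2}\nu|U''|^2\varphi^4|U'|^2B^{-4}|f|^2W^2 .
\end{align}

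The key step, and the main obstacle, is closing the second piece. The plan is the elementary inequality $\nu B^{-4}=\nu^{1/3}B^{-4/3}\cdot\nu^{2/3}B^{-8/3}\le\nu^{1/3}B^{-4/3}$, valid because $B\ge\nu^{1/4}$. Together with $\varphi^4\le\varphi^2$, this turns the second piece into $\nu^{1/3}\|U''\|_{L^\infty}^2\|U'f\varphi B^{-2/3}W\|^2$. In case (d), the extra factor squared is $\varsigma^2\nu^{2/3}\min\{t^2,\nu^{-1}\}B^{4/3}$. Pairing it with $\nu B^{-4}$ gives $\varsigma^2\min\{\nu t^2,1\}\cdot\nu^{1/3}\cdot\nu^{2/3}B^{-8/3}\le\varsigma^2\min\{1,\nu t^2\}\nu^{1/3}$, which accounts for the $\tfrac{16}{9}\varsigma^2\min\{1,\nu t^2\}$ term.

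Finally I bookkeep the Young weights so that the $\nu\|\partial_yfW\|^2$ contributions total $(48+8\|U''\|^2)\beta$ after multiplying by $4\beta^{3/2}$. A few of these derivatives, for instance the $|U'||U''|$ cross term in (a), are cleaner to split against $\|\partial_y fW\|^2$ with a constant not involving $\|U''\|$; this explains the bare constants $48$ and $28$.
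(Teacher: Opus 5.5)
Your proposal follows the paper's proof essentially step for step: the paper's $T_{\gamma;1}$ collects your groups (i)--(ii), $T_{\gamma;2}$ is the good diffusion term, $T_{\gamma;3}$ is your case (a), and $T_{\gamma;4}$ is (b)--(d), all closed by Young's inequality and $B\ge\nu^{1/4}$. There are two small bookkeeping corrections. First, the paper splits (a) the opposite way from what your last paragraph describes: it puts $\|U''\|_{L^\infty}^2$ on $\nu\|\partial_y fW\|_{L^2}^2$ (this is the $8\|U''\|_{L^\infty}^2\beta$) and a bare $8\beta^{3/2}$ on $\nu^{1/3}\|U'f\varphi B^{-2/3}W\|_{L^2}^2$, so $28=12+8+8$ while the $48$ comes entirely from (b)--(d), and this is how the stated constants hold for every $\beta\le 1$; second, in (d) the weight produced is $\varsigma^2\min\{1,\nu t^2\}\nu^{2/3}B^{-8/3}\le\varsigma^2\min\{1,\nu t^2\}\nu^{1/3}B^{-4/3}$, not $\varsigma^2\min\{1,\nu t^2\}\nu^{1/3}\cdot\nu^{2/3}B^{-8/3}$ as you wrote.
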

These estimates allow us to prove Theorem \ref{thm:ED}.
\begin{proof}[Proof of Theorem~\ref{thm:ED}]  
    Combining the estimates \eqref{T_L2}, \eqref{nondegenerate al estimate}, \eqref{nondegenerate beta estimate}, \eqref{nondegenerate gamma estimate} and setting \[\varsigma=\beta^{\f32},\]  we obtain that for $\beta\leq1$, 
    \begin{align}\begin{split}
    \frac{d}{dt}\Phi[f(t)] 
         \leq& -\f32\nu\|\pa_y fW\|_{L^2}^2+(2\varsigma+ 5\varsigma^2\|U''\|_{L^\infty}^2)\nu^\f{1}{3}\|f \varphi B^{\f{1}{3}}W\|_{L^2}^2\\
         %%%%%%%%%%%%%%%%%%%%%%%%%
         &+\lf(\frac{1}{2}+\frac{3}{4}\beta^\f12+6\beta^{\f12}\|U''\|_{L^\infty}^2\rg)\nu\|\pa_{y}f W\|_2^2 \\
            &- \f{5}{12}\beta^\f12\nu^{\f53} \|\pa_y^2f \sqrt{\varphi} B^{-\f{1}{3}}W\|_2^2 +\frac{1}{8}\beta\nu^\f{1}{3}\|U' f \varphi B^{-\f23}W\|_{L^2}^2\\
            %%%%%%%
         &-\beta\lf(\f12-32\sqrt{\beta}-64\sqrt{\beta}\|U''\|_{L^\infty}^2\rg)\nu^\f{1}{3}\|U' f \varphi B^{-\f23}W\|_{L^2}^2+  8\beta \nu \|\pa_{y}f W\|_{L^2}^2\\ &+\frac{5}{12}\beta^{\f12}\nu^{\f53} \|\pa_{y}^2f\sqrt{\varphi} B^{-\f{1}{3}}W\|_2^2 +  5\beta^\f32\nu\|U'\pa_y f\varphi^\f32B^{-1}W\|_2^2\\
        &+12\beta^{\f32}\nu^{\f{1}{3}}\|U''\|_{L^\infty}^2\|f\varphi B^{\f{1}{3}}W\|_{L^2}^2\\
    %%%%%%%%
&-8\beta^\f32\nu\|U'\pa_{y}f\varphi^\f32 B^{-1}W\|_{L^2}^2+(48+8\|U''\|_{L^\infty}^2)\beta\nu\|\pa_y f W\|_{L^2}^2\\
        &+ \lf[28+10\|U''\|_{L^\infty}^2\rg]\beta^\f32\nu^\f{1}{3}\|U' f\varphi B^{-\f23}W\|_{L^2}^2\\ 
        %%%%%%%%%%%%%%%%%%%
        \leq&-\left(1-\frac{3}{4}\beta^{\f12}-14\beta^{\f12}\|U''\|_{L^\infty}^2-56\beta\right)\nu\|\pa_y fW\|_{L^2}^2\\
        &+\lf(10\beta^{\f32}+17\beta^{\f32}\|U''\|_{L^\infty}^2\rg)\nu^\f{1}{3}\|f \varphi B^{\f{1}{3}}W\|_{L^2}^2\\
        &-\beta\lf(\f38-60\sqrt{\beta}-74\sqrt{\beta}\|U''\|_{L^\infty}^2\rg)\nu^\f{1}{3}\|U' f \varphi B^{-\f23}W\|_{L^2}^2\\
        &-3\beta^\f32\nu\|U'\pa_{y}f\varphi^\f32 B^{-1}W\|_{L^2}^2.
        \end{split}  
    \end{align}
    %We choose $\al$, $\gamma$ in terms of $\beta$ as follows 
    %\begin{align}\label{chc_al_ga}
    %    \al = \frac{\beta^{1/2}}{4}, \quad\quad \gamma = 4\beta^{3/2} .
    %\end{align}
Since we assume that $\nu\leq 1$, we can invoke the spectral inequality \eqref{spectral} to obtain,
      \begin{align*}
    \frac{d}{dt}\Phi[f(t)]\leq & -\left(1-\frac{3}{4}\beta^{\f12}-14\beta^{\f12}\|U''\|_{L^\infty}^2-56\beta\mathhlr{-20\beta^\f32-17\beta^{\f32}\|U''\|_{L^\infty}^2}\right)\nu\|\pa_y fW\|_{L^2}^2\\
        &-10\beta^\f32\nu^\f{1}{3}\|f \varphi B^{\f{1}{3}}W\|_{L^2}^2\\
        &-\beta\lf(\f38-64\sqrt{\beta}-74\sqrt{\beta}\|U''\|_{L^\infty}^2-20\mf C\beta^\f12-17\beta^{\f12}\mf C\|U''\|_{L^\infty}^2\rg)\nu^\f{1}{3}\|U' f \varphi B^{-\f23}W\|_{L^2}^2\\
        &-3\beta^\f32\nu\|U'\pa_{y}f\varphi^\f32 B^{-1}W\|_{L^2}^2.%MATH
    \end{align*}
    Hence, we can choose
    \begin{align}
    \beta=\beta(\mathfrak{C},\|U''\|_{L^\infty})<1,\label{chc_beta}
    \end{align}
small enough such that %mathcal{G}
    \begin{align}    \frac{d}{dt}\Phi[f(t)]\leq&- \frac{1}{2}\nu\|\pa_y fW\|_{L^2}^2-10\beta^\f32\nu^\f{1}{3}\|f \varphi B^{\f{1}{3}}W\|_{L^2}^2 -\beta\f12\nu^\f{1}{3}\|U' f \varphi B^{-\f23}W\|_{L^2}^2
    \\
    \leq&- \frac{1}{2}\nu^{\frac{1}{2}}\nu^{\f23}\mathbbm{1}_{t\geq\nu^{-\f12}}\|\pa_y f  B^{-\f{1}{3}}W\|_{L^2}^2-10\beta^\f32\nu^\f12\mathbbm{1}_{t\geq\nu^{-\f12}}\|f  W\|_{L^2}^2 -\beta\f12\nu^\f{1}{2}\mathbbm{1}_{t\geq\nu^{-\f12}}\|U' f B^{-1}W\|_{L^2}^2\\
    \leq &-\delta\nu^\f12\mathbbm{1}_{t\geq\nu^{-\f12}}\Phi[f(t)].\label{dfn_del}
    \end{align} 
   % \myb{Need to drop $\mathhlr{\varphi}!$} 
   Here in the last line, we have invoked the spectral inequality \eqref{spectral} and the comparison Lemma \ref{lem:eqv}. Finally, we observe that the parameter $\delta\in(0,1]$ depends only on the parameter $\mathfrak{C},\|U''\|_{L^\infty}$. So for $t\leq \nu^{-\frac{1}{2}}$, we have that 
   \begin{align}
       \Phi[f(t)]\leq \Phi[f_0]=e^{\delta}\|f_0W(t=0)\|_{L^2}^2 e^{-\delta \nu^{\f12}t}\leq e^3\|f_0\|_{L^2}^2 e^{-\delta \nu^{\f12}t}.
   \end{align}
   For $t\geq \nu^{-\f12}$, 
   \begin{align}
       \Phi[f(t)]\leq \Phi[f(\nu^{-\f12})]e^{-\delta \nu^{\f12}(t-\nu^{-\f12})}\leq e^\delta\Phi[f_0]e^{-\delta \nu^{\f12}t}\leq e^3\|f_0\|_{L^2}^2 e^{-\delta \nu^{\f12}t}.
   \end{align}
   As a consequence, we have proven \eqref{SL_ED}, and hence Theorem \ref{thm:ED}.
\end{proof}
We conclude the section by providing the details of the proof of Lemma \ref{lem:nondegenerate al}, \ref{lem:nondegenerate beta}, and \ref{lem:nondegenerate gamma}. 
\begin{proof}[Proof of Lemma~\ref{lem:nondegenerate al}] We recall the definition of $T_\al$~\eqref{ndeg_T_albe_term} and define \begin{align}\label{chc_al_ga}
        \al = \frac{\beta^{1/2}}{4}.%, \quad\quad \gamma = 4\beta^{3/2} 
    \end{align} Invoking the equation~\eqref{EQ:hypo_PS} and integration by parts yields that
    \begin{align}\begin{split}
         T_\al
         \leq&  \al\nu^{\f23}\int \mathbbm{1}_{t\leq \nu^{-\f{1}{3}}B^{-\f23}}\nu^{\f{1}{3}} B^\f23\ |\pa_{y}f|^2\ B^{-\f23}W^2dy +\al\phi\nu^\f23\int|\pa_y f|^2\ B^{-\f23} \ 2\varsigma\nu^\f{1}{3} B^\f23 W^2dy \\
         &+2\al\nu^{\f23}\Re\int \pa_{y}(\nu\pa_{y}^2f - iU f) \overline{\pa_{y} f}\ \varphi B^{-\f23} W^2 dy\\
         \leq& \al(1+2\varsigma)\nu\|\pa_{y}f W\|_2^2 - 2\al\nu^{\f53} \|\pa_y^2f \sqrt{\varphi} B^{-\f{1}{3}}W\|_2^2 \\
         &- 2\al\nu^{\f53} \int \pa_y^2f\overline{\pa_y f}\ \pa_y{\varphi}\ B^{-\f23}\ W^2 dy- 2\al\nu^{\f53} \int \pa_y^2f\overline{\pa_y f}\ {\varphi}\ \pa_y(B^{-\f23}) \ W^2dy\\
         &- 2\al\nu^{\f53} \int \pa_y^2f\overline{\pa_y f} \ {\varphi}\ B^{-\f23}\ \pa_y(W^2)dy- 2\al\nu^{\f23}\Re\int iU'f\overline{\pa_{y}f}\ \varphi B^{-\f23}W^2dy\\
         =:&\sum_{j=1}^6 T_{\al;j}.\end{split}\label{T_al_j}
       %  \leq&\al\nu\mathbbm{1}_{t\leq \ep^{-\f12}}\|\pa_{y}f\|_2^2 - 2\al\phi\ep^{1/2} \nu\|\pa_y\pa_{y}f\|_2^2 + \frac{\beta\phi^2}{B}|k_m|\|\cos(y)f_\bk\|_{L^2}^2+\frac{B\al^2}{\beta}\nu \underbrace{|k_m|^{-2}\Bigl|\sum_{j=1}^d k_j\Bigr|^2}_{\leq d^2}\|\pa_{y}f\|_{L^2}^2.
    \end{align}
    
    Now we estimate each term in the expression \eqref{T_al_j}. To begin with, we consider $T_3$. Recalling the definition of $B\ \eqref{B},\ \varphi \ \eqref{varphi}$, and observe that $|B'(y)|\leq |U''(y)|\mathbbm{1}_{\{|U'|\geq \nu^{\f14}\}}(y)$ (a.e.), we obtain  
    \begin{align}\begin{split}
    |T_{\al;3}|\leq &\frac{4}{3}\al\nu^\f53\int |\pa_y^2 f||\pa_y f|\mathbbm{1}_{t\leq \nu^{-\f{1}{3}}B^{-\f23}}\nu^{\f{1}{3}}B^{-\f{1}{3}}|B'| t \ B^{-\f23}W^2dy\\
    \leq&\frac{4}{3}\al\nu^\f53\int |\pa_y^2 f||\pa_y f| |B'|\ \varphi  B^{-\f23- 1}W^2dy\\
    \leq &\frac{1}{9}\al\nu^{\f53}\|\pa_y^2 f \sqrt{\varphi}B^{-\f{1}{3}} W\|_{L^2}^2+4\al\|U''\|^2_{L^\infty}\nu^{\f53}\int |\pa_yf|^2 \varphi B^{-8/3} W^2dy\\
    \leq &\frac{1}{9}\al\nu^{\f53}\|\pa_y^2 f \sqrt{\varphi}B^{-\f{1}{3}} W\|_{L^2}^2+4\al\nu\|U''\|^2_{L^\infty}\|\pa_yf W\|_{L^2}^2. \end{split}\label{T_al_3}
    \end{align}
    Here, in the last line, we have used $B\geq\nu^\f14,\, \varphi\leq1$. 

    Next, we estimate the $T_{\al;4}$-term in the expansion \eqref{T_al_j}. We have that 
    \begin{align}
        \begin{split}
            |T_{\al;4}|\leq &\f43\al \nu^\f53\int|\pa_y^2f||\pa_yf| B^{-1}|B'|\ \varphi B^{-\f23} W^2dy\\
            \leq &\frac{1}{9}\al\nu^\f53\|\pa_y^2 f \sqrt{\varphi}B^{-\f{1}{3}}W\|_{L^2}^2+4\al\nu^{\f53}\|U''\|_{L^\infty}^2\int|\pa_yf|^2B^{-\f83}\varphi W^2dy\\
            \leq &\frac{1}{9}\al\nu^\f53\|\pa_y^2 f \sqrt{\varphi}B^{-\f{1}{3}}W\|_{L^2}^2+4\al\nu\|U''\|_{L^\infty}^2\|\pa_yfW\|_{L^2}^2.
        \end{split}\label{T_al_4}
    \end{align}
    
     We estimate the $T_{\al;5}$-term as follows
    \begin{align}\begin{split}
         |T_{\al;5}|\leq &\f83\al \nu^\f53\int|\pa_y^2f||\pa_yf| \varsigma \nu^\f{1}{3} B^{-\f{1}{3}}|B'| \mathhlr{\min\{t,\nu^{-\f12} \}} \ \varphi B^{-\f23} W^2dy\\
            \leq &\frac{1}{9}\al\nu^\f53\|\pa_y^2 f \sqrt{\varphi}B^{-\f{1}{3}}W\|_{L^2}^2+16\varsigma^2\al\nu^{\f73}\mathhlr{\min\{t^2,\nu ^{-1}\}}\|U''\|_{L^\infty}^2\int|\pa_yf|^2B^{-\f43}\varphi W^2dy\\
            \leq &\frac{1}{9}\al\nu^\f53\|\pa_y^2 f \sqrt{\varphi}B^{-\f{1}{3}}W\|_{L^2}^2+16\varsigma^2\al\nu\|U''\|_{L^\infty}^2\mathhlr{\min\{1,\nu t^2\}}\|\pa_yfW\|_{L^2}^2.
        \end{split}\label{T_al_5}%MATH\mathbbm{1}_{t\leq \nu^{-\f12}} \mathbbm{1}_{t\leq \nu^{-\f12}}
    \end{align}

Finally, we estimate the $T_{\al;6}$-term as follows
\begin{align}
\begin{split}
     |T_{\al;6}|\leq \frac{1}{2}\nu\|\pa_y f W\|_{L^2}^2+2\al^2\nu^\f{1}{3}\|U'f\varphi B^{-\f23}W\|_{L^2}^2.
\end{split}    \label{T_al_6}
\end{align}

  These estimates, when combined with the decomposition \eqref{T_al_j}, yields the result \eqref{nondegenerate al estimate}.
\end{proof}
\begin{proof}[Proof of Lemma~\ref{lem:nondegenerate beta}]
    The estimate of the $T_\beta$ term in \eqref{ndeg_T_albe_term} is technical. Hence, we further decompose it into four terms and estimate them one by one:
    \begin{align}\label{T_beta123}\begin{split}
        T_\beta =& 2\beta\nu^\f{1}{3}\Re\langle iU'f,\pa_{y}f\ \varphi \pa_t\varphi\ B^{-\f43}W^2\rangle +2\beta\nu^\f{1}{3}\Re\langle iU'f,\pa_{y}f \varphi^2 B^{-\f43}\ W\pa_tW\rangle\\
        &+ \beta\nu^\f{1}{3}\Re\int iU'\pa_tf\overline{\pa_{y}f} \ \varphi^2B^{-\f43}W^2dy + \beta\nu^\f{1}{3}\Re\int iU'f\overline{\pa_{y t}f}\ \varphi^2B^{-\f43}W^2dy\\
        =:& \sum_{j=1}^4T_{\beta;j}.\end{split}
    \end{align}
    To begin with, we have the following bound for the $T_{\beta;1}$ using H\"older and Young's inequalities
    \begin{align*}
        |T_{\beta;1} |\leq& 2\beta\nu^\f{1}{3}\int|U'f||\pa_{y}f| \mathbbm{1}_{t\leq \nu^{-\f{1}{3}}B^{-\f23}}\nu^\f{1}{3}B^{\f23} \ \varphi B^{-\f43} W^2 dy
        \leq \frac{\beta}{4}\nu^{\f{1}{3}}\lf\|U'f\varphi B^{-\f23}W\rg\|_{L^2}^2 +  4\beta \nu \|\pa_{y}f W\|_{L^2}^2.
    \end{align*}
    
Next we compute the term $T_{\beta;2}$:
\begin{align*}
        |T_{\beta;2}|\leq &2\beta\nu^\f{1}{3}\int |U' f| |\pa_y f|\ \varsigma\nu^{\f{1}{3}}B^\f23 \ \varphi^2 B^{-\f43} W^2dy 
        \leq  \frac{\beta}{4}\nu^{\f{1}{3}}\varsigma\lf\|U'f\varphi B^{-\f23}W\rg\|_{L^2}^2 +  4\varsigma\beta \nu \|\pa_{y}f W\|_{L^2}^2.
\end{align*}
Now we move on to the third term, 
    \begin{align*}
        T_{\beta;3} =& \beta\nu^\f{1}{3}\Re\int iU'\lf(\nu\pa_{y}^2f-iU f\rg)\overline{\pa_{y}f} \varphi^2 B^{-\f43}W^2dy\\
        =& \beta\nu^\f{4}{3}\Re\int iU'\pa_y^2 f \overline{\pa_{y}f} \varphi^2 B^{-\f43}W^2dy+ \beta\nu^\f{1}{3}\Re\int U'Uf\overline{\pa_{y}f} \varphi^2 B^{-\f43}W^2dy\\
        \leq& \frac{1}{4}\beta^{\f12}\nu^{\f53} \|\pa_{y}^2f\sqrt{\varphi} B^{-\f{1}{3}}W\|_2^2 +  \beta^\f32\nu\|U'\pa_y f\varphi^\f32B^{-1}W\|_2^2\\
        &\mathhlr{+ \beta\nu^\f{1}{3}\Re\int U'Uf\overline{\pa_{y}f} \varphi^2 B^{-\f43}W^2dy}.
    \end{align*}
    Finally we estimate the term $T_{\beta;4}$ in~\eqref{T_beta123}
    \begin{align}\label{T_beta4}\begin{split}
        T_{\beta;4} =& \beta\nu^\f{1}{3}\Re\int iU' f\overline{\lf(\nu\pa_{y}^3f - iU'f - iU\pa_{y}f\rg)}\varphi^2 B^{-\f43} W^2dy\\
        =& -\beta\nu^\f{4}{3}\Re\int i\lf(U'' f +U'\pa_y f\rg)\overline{\pa_{y}^2 f}\varphi^2 B^{-\f43}W^2 dy \\
        &-\beta\nu^\f{4}{3}\Re\int iU' f\overline{\pa_{y}^2 f}\pa_y(\varphi^2 B^{-\f43}W^2 )dy\\
        &- \beta\nu^\f{1}{3}\int|U'|^2|f|^2\varphi^2 B^{-\f43} W^2 dy -\beta\nu^\f{1}{3}\Re\int U 'Uf\overline{\pa_{y}f}\varphi^2 B^{-\f43} W^2 dy\\
        =:&\sum_{j=1}^4 T_{\beta;4j}.\\
        % \leq&\frac{3\al}{4}\phi\ep^{1/2}\nu\|\pa_y \pa_{y}f_\bk\|_{L^2}^2+\frac{\beta^2}{\al}\phi^3 \ep^{-1/2}\nu\|\sin(y)\|_{L^\infty}^2\|f_\bk\|_{L^2}^2\\
        % &+\frac{\beta^2}{2\al\gamma}\gamma\nu^{\f12}|k_m|^{\f12}\phi^3\|\cos(y)\pa_y f_\bk\|_{L^2}^2-\beta\phi^2|k_m|\|\cos(y)f_\bk\|_{L^2}^2\\
        % &\mathhlr{ -\beta\phi^2\operatorname{sign}(k_m)\sum_{j=1}^dk_j \Re\int \cos(y)\sin(y)f_\bk\overline{\pa_{y}f_\bk}\dy}
        \end{split}
    \end{align} We note that the $T_{\beta;43}$ is the coercive term and $T_{\beta;44}$ cancel the last term of $T_{\beta;3}$. 
    Now we estimate the remaining terms in \eqref{T_beta4}. To begin with, we consider the $T_{\beta;41}$,
    \begin{align}\begin{split}
        |T_{\beta;41}|\leq& \frac{1}{8}\beta^\f12\nu^{\f53}\|\pa_y^2f \varphi^\f12 B^{-\f{1}{3}}W\|_{L^2}^2+4\beta^{\f32}\nu\|U''\|_{L^\infty}^2\|f\varphi^\f32 B^{-1}W\|_{L^2}^2\\
        &+4\beta^{\f32}\nu\|U'\pa_y f \varphi^{\f32}B^{-1}W\|_{L^2}^2\\
        \leq& \frac{1}{8}\beta^\f12\nu^{\f53}\|\pa_y^2f \varphi^\f12 B^{-\f{1}{3}}W\|_{L^2}^2+4\beta^{\f32}\nu^{\f{1}{3}}\|U''\|_{L^\infty}^2\|f\varphi B^{\f{1}{3}}W\|_{L^2}^2\\
        &+4\beta^{\f32}\nu\|U'\pa_y f \varphi^{\f32}B^{-1}W\|_{L^2}^2.
    \end{split}
    \end{align}
    Next, we estimate the $T_{\beta;42}$-term. We apply the definitions of $B\geq\nu^{\f14}$ \eqref{B}, $\varphi\leq 1$ \eqref{varphi} and the $W$-estimate \eqref{W_est_y} to obtain that 
    \begin{align}\begin{split}
        &|T_{\beta;42}|\leq \beta\nu^\f43\int |U' f||\pa_y^2 f|\Bigl[\Bigl|\f43\underbrace{\nu^\f{1}{3}\frac{B'}{B^{\f{1}{3}}}\min\{t,\nu^{-\f12}\}}_{\varphi B'B^{-1}}\Bigr|\varphi B^{-\f43}W^2+\frac{4}{3}\frac{|B'|}{B^{\f{7}{3}}}\varphi^2 W^2\\
        &\hspace{4.25cm}+\frac{4}{3}\varsigma\nu^\f{1}{3}\frac{\|U''\|_{L^\infty}}{B^{\f{1}{3}}}\min\{t,\nu^{-\f12}\}\varphi^2 B^{-\f43}W^2\Bigr]dy\\ &
        \leq\frac{\beta^{\f12}}{24}\nu^{\f53}\|\pa_y^2f \varphi^\f12 B^{-\f{1}{3}}W\|_{L^2}^2+32\beta^{\f32}\nu\|U''\|_{L^\infty}^2\|U' f \varphi^{\f32}B^{-2}W\|_{L^2}^2+32\beta^{\f32}\nu\|U' f \varphi^{\f32}B^{-2}W\|_{L^2}^2\\
        &\quad+32\varsigma^2\beta^{\f32}\nu^\f53\min\{t,\nu^{-\f12}\}^2\|U''\|_{L^\infty}^2\|U' f \varphi^{\f32}B^{-\f43}W\|_{L^2}^2\\
         &\leq\frac{\beta^{\f12}}{24}\nu^{\f53}\|\pa_y^2f \varphi^\f12 B^{-\f{1}{3}}W\|_{L^2}^2+32\beta^{\f32}\|U''\|_{L^\infty}^2\nu^{\f{1}{3}}\|U' f \varphi B^{-\f{2}{3}}W\|_{L^2}^2+32\beta^{\f32}\nu^\f{1}{3}\|U' f \varphi B^{-\f23}W\|_{L^2}^2\\
        &\quad+32\varsigma^2\beta^{\f32}\nu^\f{1}{3}\min\{\nu t^2,1\}\|U''\|_{L^\infty}^2\|U' f \varphi B^{-\f23}W\|_{L^2}^2.
    \end{split}
    \end{align}%22 22 2 2
    In the above estimation, we constantly use the relation $B\geq\nu^{\f14}$ to adjust the weights. For example, we have the relation $B^{-4}\leq \nu^{-\f23}B^{-\f43}$. Moreover, we have the relation $\nu\min\{t,\nu^{-\f12}\}^2\leq \min\{1,\nu t^2\}. $
   Hence, 
    \begin{align}\begin{split}
        T_\beta \leq& -\beta\lf(\f12-32\sqrt{\beta}-64\sqrt{\beta}\|U''\|_{L^\infty}^2\rg)\nu^\f{1}{3}\|U' f \varphi B^{-\f23}W\|_{L^2}^2+  8\beta \nu \|\pa_{y}f W\|_{L^2}^2\\ &+\frac{5}{12}\beta^{\f12}\nu^{\f53} \|\pa_{y}^2f\sqrt{\varphi} B^{-\f{1}{3}}W\|_2^2 +  5\beta^\f32\nu\|U'\pa_y f\varphi^\f32B^{-1}W\|_2^2\\
        &+12\beta^{\f32}\nu^{\f{1}{3}}\|U''\|_{L^\infty}^2\|f\varphi B^{\f{1}{3}}W\|_{L^2}^2.
    \end{split}
    \end{align}
    This concludes the proof.
\end{proof}
\begin{proof}[Proof of Lemma~\ref{lem:nondegenerate gamma}]
    Through substitution of the equation~\eqref{EQ:hypo_PS} and integration by parts, we produce
    \begin{align}\begin{split}
        T_\gamma \leq & 12\beta^{\f32}\int |U'|^2|f|^2 \nu^\f{1}{3}B^{\f23}\varphi^2 B^{-2} W^2 dy +8\beta^{\f32}\int |U'|^2|f|^2 \varsigma\nu^\f{1}{3}B^{\f23} \varphi^3 B^{-2} W^2 dy\\ &+8\beta^{\f32}\Re\int |U'|^2\pa_t f\overline{f} \varphi^3 B^{-2}W^2 dy\\
        \leq& (12+8\varsigma)\beta^\f32\nu^\f{1}{3}\|U'f\varphi B^{-\f23}W\|_2^2\\
        &+ 8\beta^\f32\lf( \Re\int |U'|^2\lf(\nu\pa^2_{y}f - iUf\rg)\overline{f}\varphi^3 B^{-2}W^2dy\rg)\\
        \leq& 20\beta^\f32\nu^\f{1}{3}\|U'f\varphi B^{-\f23}W\|_2^2 -8\beta^\f32\nu\|U'\pa_{y}f\varphi^\f32 B^{-1}W\|_{L^2}^2\\
        &- 8\beta^\f32\nu\Re\int 2U''U'\pa_{y}f\ \overline{ f}\varphi^3 B^{-2}W^2dy- 8\beta^\f32\nu\Re\int |U'|^2\pa_{y}f\ \overline{ f}\pa_y(\varphi^3 B^{-2}W^2)dy\\
        =:&\sum_{j=1}^4T_{\gamma;j}.\end{split}
        \end{align}
        We estimate the $T_{\gamma;3}$-term as follows
        \begin{align}
            |T_{\gamma;3}|\leq& 8\beta^\f32 \nu^{\f{1}{3}}\|U' f\varphi B^{-\f23}W\|_{L^2}^2+8\beta^\f32\nu^{\frac{5}{3}}\|U''\|_{L^\infty}^2\|\pa_y fW\|_{L^2}^2\|B^{-4/3}\|_{L^\infty}^2\\
\leq&            8\beta^\f32 \nu^{\f{1}{3}}\|U' f\varphi B^{-\f23}W\|_{L^2}^2+8\beta^\f32\nu\|U''\|_{L^\infty}^2\|\pa_y fW\|_{L^2}^2. 
        \end{align}
Finally, we estimate the $T_{\gamma;4}$-term,
\begin{align}\begin{split}
    &|T_{\gamma;4}|\\
     &\leq8\beta^{\f32}\nu \int|U'|^2|\pa_yf||f|\Big|2\nu^{\f{1}{3}}\frac{B'}{B^{\f{1}{3}}}\min\{t,\nu^{-\f12}\} \varphi^2 B^{-2}W^2-2 \frac{B'}{B^{3}}\varphi^3W^2+\frac{4}{3}\varsigma\nu^{\f{1}{3}}B'B^{-\f{1}{3}}\min\{t,\nu^{-\f12}\}\varphi^3B^{-2} W^2\Big|dy\\
    &\leq 48\beta\nu\|\pa_y f W\|_{L^2}^2+ 8\|U''\|_{L^\infty}^2\beta^2\nu\|U' f\varphi B^{-\f23}W\|_{L^2}^2\|B^{-\f{8}3}\|_{L^\infty}\\
    &\quad +\f{16}{9}\varsigma^2\beta^{2}\min\{\nu^{-1}, t^2\}\nu^\f53\|U''\|_{L^\infty}^2\|U' f \varphi B^{-\f23}W\|_{L^2}^2\|B^{-\f43}\|_{L^\infty}\\
    &\leq 48\beta\nu\|\pa_y f W\|_{L^2}^2+ \lf(8+\f{16}{9}\varsigma^2\min\{1, \nu t^2\}\rg)\|U''\|_{L^\infty}^2\beta^2\nu^\f{1}{3}\|U' f\varphi B^{-\f23}W\|_{L^2}^2.
\end{split}
\end{align}
        Hence,
        \begin{align}
        T_{\gamma}
        \leq& -8\beta^\f32\nu\|U'\pa_{y}f\varphi^\f32 B^{-1}W\|_{L^2}^2+(48+8\|U''\|_{L^\infty}^2)\beta\nu\|\pa_y f W\|_{L^2}^2\\
        &+ \lf[28+\lf(8+\f{16}{9}\varsigma^2\min\{1, \nu t^2\}\rg)\|U''\|_{L^\infty}^2\rg]\beta^\f32\nu^\f{1}{3}\|U' f\varphi B^{-\f23}W\|_{L^2}^2.
     %\begin{split}   & - 2\gamma\phi^3\nu^{\f12}|k_m|^{1/2}\|\cos(y) \pa_y f_\bk\|_2^2\end{split}
    \end{align}This concludes the proof of \eqref{nondegenerate gamma estimate}.
\end{proof}

\appendix
\section{Technical Lemmas}
\begin{lemma}\label{lem:spec}
Consider shear flow profiles $U$ with finitely many non-degenerate critical points. For $\nu\in (0, 1]$, there exists a constant $\mathfrak{C}(U)$ such that the following estimate holds
    \begin{align}\label{spectral}
{\nu}^{\f{1}{3}}\|f B^{\f{1}{3}}\varphi W\|_{L^2(\Torus)}^2\leq \nu\|\pa_{y} f W\|_{L^2(\Torus)}^2+\mathfrak{C}\nu^\f{1}{3}\lf\| U' B^{-\f23}f \varphi W\rg\|_{L^2(\Torus)}^2. 
    \end{align} 
\end{lemma}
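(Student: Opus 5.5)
The plan is to prove \eqref{spectral} by splitting the torus according to the size of $U'$. Where $B=|U'|$ the inequality is a pointwise identity; near the critical points, where $B\equiv\nu^{1/4}$, I will use a local Poincaré-type argument at scale $\nu^{1/4}$. The weights $\varphi,W$ will be handled by checking that they are comparable to constants on the relevant intervals.

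\emph{Step 1 (monotone region).} On $\{|U'|\geq \nu^{1/4}\}$ we have $B=|U'|$, hence $B^{2/3}=|U'|^2B^{-4/3}$. So the integrand of the left side of \eqref{spectral} equals the integrand of the second term on the right side there, and the inequality holds pointwise on this set as long as $\mathfrak C\geq1$.

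\emph{Step 2 (critical layers, $\nu\le\nu_0(U)$).} Let $y_1,\dots,y_N$ be the nondegenerate critical points and put $m=\min_j|U''(y_j)|>0$. For $\nu\leq \nu_0(U)$ small, Taylor expansion gives $|U'(y)|\approx |U''(y_j)||y-y_j|$ near $y_j$, uniformly up to distance $\nu_0^{1/4}$. Hence $\{|U'|<\nu^{1/4}\}$ is a union of intervals $I_j$ of length $\approx \nu^{1/4}/|U''(y_j)|$. Let $J_j$ be the two adjacent intervals on which $\nu^{1/4}\le |U'|\le 2\nu^{1/4}$; they also have length $\sim_U \nu^{1/4}$.

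On $I_j\cup J_j$ we have $B\in[\nu^{1/4},2\nu^{1/4}]$. From the definitions \eqref{varphi} and \eqref{W}, this gives:
\begin{itemize}
\item $\varphi(t,\cdot)$ varies there by at most a factor $2^{2/3}$;
\item the exponent of $W$ lies in $[\varsigma,\varsigma 2^{2/3}]$, because $\nu^{1/3}B^{2/3}\min\{t,\nu^{-1/2}\}\leq 2^{2/3}\nu^{1/2}\nu^{-1/2}$;
\item so $\varphi W$ is comparable to its value at any fixed point of $I_j\cup J_j$, uniformly in $t$, with absolute constants.
\end{itemize}
The elementary one-dimensional inequality $\int_{I}|f|^2\leq C L^2\int_{I\cup J}|f'|^2+C\int_J|f|^2$, with $L\sim\nu^{1/4}$ and $|J|\sim|I|$, then applies. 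Multiplying by $\nu^{1/2}$ (which is $\nu^{1/3}B^{2/3}$ on $I_j$) and using $L^2\nu^{1/2}\sim\nu$ gives
\[
\nu^{1/2}\int_{I_j}|f|^2\varphi^2W^2\lesssim \nu\int_{I_j\cup J_j}|f'|^2 W^2+\nu^{1/3}\int_{J_j}|U'|^2B^{-4/3}|f|^2\varphi^2W^2 .
\]
For the first term we use $\varphi\le1$. For the second we use that on $J_j$ we have $|U'|^2B^{-4/3}=B^{2/3}\approx\nu^{1/6}$, and we restore the frozen weights at the cost of constants. The first term on the right is not bounded by a constant multiple of $\nu\int|f'|^2W^2$ with constant $1$. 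I will absorb this by applying the Poincaré inequality with a small parameter: use $\int_I|f|^2\le \epsilon L^2\int|f'|^2+C_\epsilon\int_J|f|^2$, taking $J$ of length $\sim \epsilon^{-1}|I|$. Equivalently, take $J_j=\{\nu^{1/4}\le|U'|\le K\nu^{1/4}\}$ with $K$ large, so that the weights are still comparable up to $K$-dependent constants. The cost goes into $\mathfrak C(U)$. Summing over $j$ and adding Step 1 gives \eqref{spectral}.

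\emph{Step 3 ($\nu_0<\nu\le1$).} Here $B\geq\nu_0^{1/4}$ is bounded above and below, so $\varphi W$ is bounded above and below by constants depending on $U$. It then suffices to prove $\int_{\Torus}|g|^2\leq C(\int|g'|^2+\int|U'|^2|g|^2)$, which holds because $U'$ vanishes only at finitely many points. This follows from the same local argument at a fixed scale, or by compactness.

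The main obstacle is Step 2: fixing the leading constant $1$ in front of $\nu\|\pa_yfW\|^2$ while the weights $\varphi,W$ depend on $y$. The plan is to freeze the weights using the bounded oscillation of $B$ on $I_j\cup J_j$, and to push all losses into $\mathfrak C$ via a wider transition interval $J_j$.
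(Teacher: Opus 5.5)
Your argument does not reach the coefficient $1$ in front of $\nu\|\partial_y fW\|_{L^2}^2$. You rightly flag that the local Poincar\'e step gives a coefficient that need not be $\le 1$, but the remedy you propose is false, so this is a genuine gap for the lemma as stated. No inequality $\int_I|f|^2\le A\int_{I\cup J}|f'|^2+C\int_J|f|^2$ with $J$ disjoint from $I$ can hold with $A<|I|^2/\pi^2$, whatever $J$ and $C$ are. To see this, take the first Dirichlet eigenfunction of $I$, extended by zero: it has $\int_J|f|^2=0$ and $\int_I|f|^2=\frac{|I|^2}{\pi^2}\int|f'|^2$. Widening $J_j$ or taking $K$ large leaves $I_j$ unchanged, and $|I_j|\approx 2\nu^{1/4}/|U''(y_j)|$. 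So after multiplying by $\nu^{1/2}$, your Step 2 gives at best the coefficient $4/(\pi^2|U''(y_j)|^2)$ in front of $\nu\int|f'|^2W^2$. This exceeds $1$ whenever $|U''(y_j)|<2/\pi$ (e.g.\ $U=\frac12\sin y$). The missing idea is that the potential term must be used \emph{inside} the critical layer, not only on $J_j$. There $B\equiv\nu^{1/4}$, so $\nu^{1/3}|U'|^2B^{-4/3}=|U'|^2$, which vanishes only at $y_j$. The relevant bound is of harmonic-oscillator type, $\nu\|g'\|^2+\mathfrak C\|U'g\|^2\gtrsim\sqrt{\mathfrak C}\,|U''(y_j)|\,\nu^{1/2}\|g\|^2$, and it is the size of $\mathfrak C$ that makes the derivative coefficient small.

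The paper does exactly this. It localizes with cutoffs $\chi_j$ supported in $B(y_j;\vartheta\nu^{1/4})$ with $\vartheta(U)$ small, so that $B\equiv\nu^{1/4}$ and $\varphi,W$ are independent of $y$ on $\operatorname{supp}\chi_j$. Since $|U''|\gtrsim_U 1$ there, $\nu^{1/2}\int|f\chi_j|^2\varphi^2W^2\,dy\lesssim\nu^{1/2}\bigl|\int\partial_y|f\chi_j|^2\,U'\varphi^2W^2\,dy\bigr|$. Young's inequality then puts as small a constant as desired on $\nu|\partial_y(f\chi_j)|^2$ and a large one on $|U'|^2|f\chi_j|^2$. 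The cutoff error lives where $|U'|\approx_U\nu^{1/4}$ and goes into the potential term; on $\operatorname{supp}\chi_0$ one has $B\lesssim_U|U'|$ pointwise.

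Within your framework, the repair is to shrink the core rather than widen $J$. Apply Poincar\'e only on $\{|U'|<\nu^{1/4}/K\}$, with transition interval $\{\nu^{1/4}/K\le|U'|<2\nu^{1/4}/K\}$; this costs $\sim\nu/(K^2|U''(y_j)|^2)$ in front of $\nu\int|f'|^2$. Treat the rest of $\{|U'|<\nu^{1/4}\}$ pointwise via $\nu^{1/2}\le K^2|U'|^2$. Everything then happens where the weights are $y$-independent, so no freezing ``up to constants'' is needed, which would itself cost factors larger than $1$. (Alternatively, $\varphi$ and $W$ are nondecreasing in $B$, so they are smallest on the layer.) This works uniformly for $\nu\in(0,1]$ and makes your Step 3 unnecessary. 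That step has the same constant problem, and $\varphi$ is not bounded below as $t\to0$; only its oscillation in $y$ at fixed $t$ is bounded.

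As written, your argument proves the inequality only with a $U$-dependent constant in front of $\nu\|\partial_yfW\|_{L^2}^2$. That weaker form would suffice where the lemma is used in the proof of the main theorem, since it multiplies terms carrying a factor $\beta^{3/2}$ there, but it is not the stated lemma.
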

%\begin{remark}
%    This estimate is a consequence of the uncertainty principle in $\rr^d. $ The key in the estimate is that the bound only depends on the regularity in the $y_j$-direction. 
%\end{remark}
\begin{proof} Assume that $U$ has $M$ critical points $\{y_j\}_{j=1}^M$. We choose a parameter $\vartheta(U)\in(0,1)$ such that \begin{align}
    &\text{dist}\Bigl(B(y_j;\vartheta \nu^{\f14}),B(y_{j'};\vartheta \nu^{\f14} )\Bigr)\geq \frac12\min_{j\neq j'}\{\operatorname{dist}(y_{j},y_{j'})\},\quad \forall j\neq j';\\
    &|U'(y)|\gtrsim_U \nu^{\frac{1}{4}},\quad\forall y\notin \bigcup_{j=1}^M B\lf(y_j;\frac{\vartheta}{2} \nu^{\f14}\rg);\quad 
    |U'(y)|\approx_U\operatorname{dist}(y_{j},y),\quad\forall y\in  B\lf(y_j;{\vartheta} \nu^{\f14}\rg). \end{align}
As a consequence of the last relation and the definition of $B(y)=\max\{|U'(y)|,\nu^{\f14}\}$, we have that if $\vartheta(U)$ is chosen small enough, for all $y\in  \bigcup_{j=1}^M B\lf(y_j;{\vartheta} \nu^{\f14}\rg)$, 
\begin{align}
    |U'(y)|\leq C(U)\vartheta \nu^{\f14}\leq \frac{1}{2}\nu^{\f14}\;\Longrightarrow\; B(y)\equiv\nu^{\f14}.
\end{align}
Hence,
    \begin{align}
   & \pa_y W(t,y)\equiv 0,\quad\forall y\in \bigcup_{j=1}^M B\lf(y_j;{\vartheta} \nu^{\f14}\rg).
\end{align}
Now we define a smooth partition of unity $\{\chi_i\}_{i=0}^M$,
\begin{align}
    &\chi_j(y)=\begin{cases}1,&\quad \operatorname{dist}(y,y_j)\leq \frac{1}{2}\vartheta\nu^\f14,\\
    0,&\quad \operatorname{dist}(y,y_j)\geq \vartheta\nu^\f14,\end{cases}\quad j\neq 0;\quad\\
    & \chi_0(y)=1-\sum_{j=1}^M\chi_j(y);\quad \max_{j=0, \cdots, M}\|\chi'_j\|_{L^\infty}\leq C(U)\nu^{-\f14}. 
\end{align} We decompose the function $f(y)=f(y)(\sum_{j=0}^M \chi_j)=\sum_{j=0}^M \pw fj(y)$. Now for the $L^2$-estimate of $\pw fj,\, j\neq 0$, we use the integration by parts formula to bound. For example, we consider the case where $j=1$, 
\begin{align}
    \nu^{\frac{1}{3}}&\int_{\mathbb{T}} B^{\f{2}{3}}|\pw f1|^2\varphi ^2 W^2 dy\\
    \leq & C(U)\nu^{\frac{1}{2}}\bigg|\int_{y\in \Torus} |\pw f1|^2 \pa^2_{y}U \varphi^2 W^2 dy\bigg| = C(U)\nu^{\f12}\bigg|\int_{y\in \Torus} \pa_{y}|\pw f1|^2 U'\varphi ^2 W^2 dy\bigg|\\
    \leq& \frac{1}{2}\nu\|\pa_{y} \pw f1W\|_{L^2(\operatorname{supp}\chi_1)}^2+ C(U)\lf\|U' \pw f1 \varphi W\rg\|_{L^2}^2\\
    \leq & \frac{1}{2}\nu \|\pa_y f \varphi W\|_{L^2(\operatorname{supp}\chi_1)}^2+C(U)\nu\lf\|\f{U'}{\nu^\f14}\frac{|U'|^{\f23}\vee\nu^\f{1}{6}}{B^{\f23}}f\varphi W\rg\|_{L^2(\operatorname{supp}\chi_1')}^2\|\chi_1'\|_{L^\infty}^2+C(U)\lf\|U'\frac{|U'|^{\f23}\vee\nu^\f{1}{6}}{B^{\f23}} f\chi_1 \varphi W\rg\|_{L^2}^2\\
    \leq&\frac{1}{2}\nu \|\pa_y f W\|_{L^2(\operatorname{supp}\chi_1)}^2+C(U)\nu^\f{1}{3}\lf\|U'{B^{-\f23}} f\chi_1 \varphi W\rg\|_{L^2}^2.\label{spec_nr_crit}
\end{align}
Since the supports of the cutoff functions $\{\chi_j\}_{j\neq 0}$ are disjoint, we have that 
\begin{align}
\nu^{\f{1}{3}}\int_{\Torus} B^{\f23}|f(1-\chi_0)|^2 \varphi ^2W^2dy\leq \frac{1}{2}\nu \|\pa_{y}f\|_{L^2}^2+C(U)\nu^{\f{1}{3}}\| fU'B^{-\f23}\varphi W\|_{L^2}^2.
\end{align}
We further observe that, since the $|U'(y)|\geq \frac{\nu^{\f14}}{C(U)}$ on the support of $\chi_0$, 
\begin{align}
    \nu^{\f{1}{3}}\|f\chi_0 B^{\f{1}{3}} \varphi W\|_{L^2}^2\leq& C(U)\nu^{\f{1}{3}}\int |f\chi_0|^2|U'|^{\f23} \varphi ^2 W^2dy= C(U)\nu^{\f{1}{3}}\int |f\chi_0|^2\frac{|U'|^2}{|U'|^{\f43}}\varphi ^2 W^2dy\\
    \leq&C(U)\nu^\f{1}{3} \|fU' B^{-\f23}\chi_0\varphi W\|_{L^2}^2.
\end{align} Here, we have used the fact that on the support of $\chi_0$, $B=\max\{\nu^\f14,|U'|\}\leq C(U) |U'|. $ So $\frac{1}{|U'|^\f43}\mathbf{1}_{\operatorname{supp}\chi_0}\leq \frac{C(U)}{B^{\f43}}\mathbf{1}_{\operatorname{supp}\chi_0}$.  
Combining the above estimates, we have that 
\begin{align}
      \nu^{\frac{1}{3}}\| B^{\f{1}{3}} f \varphi W\|_{L^2}^2\leq& \nu \|\pa_y f W\|_{L^2}^2+C(U)\nu^\f{1}{3}\lf\|U'{B^{-\f23}} f\varphi W\rg\|_{L^2}^2.
\end{align}This concludes the proof of the lemma.
\end{proof}
\begin{lemma}\label{lem:W_est}
    Consider the Lipschitz function $W$ \eqref{W}. Further assume that the parameters $\nu,\varsigma\in(0, 1]$. The following estimates hold
    \begin{align}\label{W_est_t}
        |\pa_t W(t,y)|\leq&\varsigma\nu^{\f{1}{3}}B^{\f23}\mathbbm{1}_{t\in[\nu^{-\f{1}{3}}B^{-\f23}, \nu^{-\f12}]}W(t,y),\quad \text{a.e.},\\
\label{W_est_y}        |\pa_y W(t,y)|\leq&\frac{2\varsigma}{3}\mathbbm{1}_{t\geq\nu^{-\f{1}{3}}B^{-\f23}}\nu^{\f{1}{3}}B^{-\f{1}{3}}\min\{t,\nu^{-\f12}\}\|U''\|_{L^\infty} W(t,y) ,\quad \text{a.e.}
    \end{align}%\quad \varsigma\in(0,1) 
\end{lemma}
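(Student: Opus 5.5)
The plan is to write $W=e^{\Theta}$ with
\[
\Theta(t,y)=\varsigma\,\max\bigl\{1,\ \nu^{\f13}B^{\f23}(y)\min\{t,\nu^{-\f12}\}\bigr\},
\]
which is just a rewriting of \eqref{W}: $\nu^{\f13}B^{\f23}\max\{\nu^{-\f13}B^{-\f23},m\}=\max\{1,\nu^{\f13}B^{\f23}m\}$. The functions $B=\max\{|U'|,\nu^{\f14}\}$, $t\mapsto\min\{t,\nu^{-\f12}\}$ and the outer $\max$ are all Lipschitz, so $\Theta$ is Lipschitz in $(t,y)$ on bounded sets. Hence $\pa_tW=W\pa_t\Theta$ and $\pa_yW=W\pa_y\Theta$ almost everywhere, and it suffices to estimate $\pa_t\Theta$ and $\pa_y\Theta$. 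At almost every point, a derivative of a max or min of two Lipschitz functions equals the derivative of the active branch. On the coincidence sets, both branches have the same derivative a.e. by Stampacchia's lemma, or the set is null. So I will split into regions and differentiate the active branch.

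For \eqref{W_est_t}, fix $y$. If $\nu^{\f13}B^{\f23}\min\{t,\nu^{-\f12}\}<1$, the active branch is the constant $1$, so $\pa_t\Theta=0$. This region contains every $t<\nu^{-\f13}B^{-\f23}$. If $t>\nu^{-\f12}$, the inner minimum is frozen at $\nu^{-\f12}$, so again $\pa_t\Theta=0$. In the remaining region, $\nu^{-\f13}B^{-\f23}\le t\le\nu^{-\f12}$, we have $\Theta=\varsigma\nu^{\f13}B^{\f23}t$, so $\pa_t\Theta=\varsigma\nu^{\f13}B^{\f23}$. This gives the bound with the indicator $\mathbbm{1}_{t\in[\nu^{-\f13}B^{-\f23},\nu^{-\f12}]}$.

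For \eqref{W_est_y}, fix $t$ and set $m=\min\{t,\nu^{-\f12}\}$. On the set $\{\nu^{\f13}B^{\f23}m<1\}$, i.e. $t<\nu^{-\f13}B^{-\f23}$, $\Theta\equiv\varsigma$ is locally constant in $y$, so $\pa_y\Theta=0$. On the complementary set, which is $t\ge\nu^{-\f13}B^{-\f23}$ up to the null boundary, we get
\[
\pa_y\Theta=\tfrac23\varsigma\nu^{\f13}B^{-\f13}B'\,m .
\]
Then we use $|B'|\le|U''|\mathbbm{1}_{\{|U'|\ge\nu^{\f14}\}}\le\|U''\|_{L^\infty}$ a.e. This holds because $B'=0$ a.e. on $\{|U'|<\nu^{\f14}\}$ and on the level set where $|U'|=\nu^{\f14}$, while elsewhere $B'=\pm U''$. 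The result is exactly \eqref{W_est_y}.

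The main obstacle is the justification of a.e. differentiation across the kinks, namely where $|U'|=\nu^{\f14}$, where $t=\nu^{-\f13}B^{-\f23}$, and where $t=\nu^{-\f12}$. I would handle it with the standard fact that for Lipschitz $g$, $\nabla g=0$ a.e. on any level set $\{g=c\}$. Applying this to $|U'|-\nu^{\f14}$ and to $\nu^{\f13}B^{\f23}m-1$ shows the kink sets contribute nothing beyond what the branch formulas give, so no delta contributions arise. The constants $\nu,\varsigma\le1$ are only needed for consistency with the stated form and play no role in the argument.
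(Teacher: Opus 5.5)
Your proposal is correct and follows essentially the same route as the paper. Both rewrite $\log W$ as $\varsigma\max\{1,\nu^{1/3}B^{2/3}\min\{t,\nu^{-1/2}\}\}$ (the paper splits at $t=\nu^{-1/2}$ and writes the max as $\frac12(a+b+|a-b|)$), differentiate the active branch, and use $|B'|\le\|U''\|_{L^\infty}$ a.e.; your level-set (Stampacchia) argument at the kinks makes rigorous what the paper leaves implicit in its sign factor $\frac{1-x}{|1-x|}$.
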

\begin{proof}First of all, we recall the exponential part of the temporal weight $W$ \eqref{W} and rephrase it in terms of absolute value functions
    \begin{align}
\log W&=\varsigma \nu^\f{1}{3}B^{\f23}\max\bigl\{\nu^{-\f{1}{3}}B^{-\f23},\min\{t,\nu^{-\f12}\}\bigr\}\\
&=\begin{cases}\varsigma \nu^\f{1}{3}B^{\f23}\max\bigl\{\nu^{-\f{1}{3}}B^{-\f23}, t\bigr\},&\quad t\leq \nu^{-\f12}\\
\varsigma \nu^\f{1}{3}B^{\f23}\max\bigl\{\nu^{-\f{1}{3}}B^{-\f23},\nu^{-\f12}\bigr\},&\quad t\geq \nu^{-\f12}
\end{cases}\\
&=\begin{cases}\varsigma \max\bigl\{1, \nu^\f{1}{3}B^{\f23}t\bigr\},&\quad t\leq \nu^{-\f12}\\
\varsigma \nu^{-\f{1}{6}}B^{\f23},&\quad t\geq \nu^{-\f12}
\end{cases}\\
&=\begin{cases}\displaystyle \f{{\varsigma}}{2}\bigl[1+ \nu^\f{1}{3}B^{\f23}t+ {|1-\nu^\f{1}{3}B^{\f23}t|} \bigr],&\quad t\leq \nu^{-\f12}\\
\varsigma \nu^{-\f{1}{6}}B^{\f23},&\quad t\geq \nu^{-\f12}
\end{cases}. 
\end{align}
%7
Here, in the computation, we have used the assumption that $\nu\leq 1$ and $B=\max\{|U'|,\nu^{\f14}\}\geq \nu^{\f14}$ \eqref{B}. As a consequence, it is natural to distinguish between the $t\leq \nu^{-\f12}$ and the $t> \nu^{-\f12}$ cases. 

In the case where $t\leq \nu^{-\f12}$, we take the time derivative to obtain that
\begin{align}
    |\pa_t W|=\f\varsigma 2 \lf|\nu^\f{1}{3}B^{\f23}\lf(1-\frac{1-\nu^{\f{1}{3}}B^{\f23}t}{|1-\nu^{\f{1}{3}}B^{\f23}t|}\rg)\rg|W\leq \mathbbm{1}_{t\in[\nu^{-\f{1}{3}}B^{-\f23}, \nu^{-\f12}]}\varsigma\nu^{\f{1}{3}}B^\f23 W,\quad \text{a.e.}
\end{align}
Next, we estimate the spatial derivative. We first estimate $|B'|$. Since $B(y)=\max\{|U'(y)|,\nu^{1/4}\}$ is Lipschitz, $B'$ exists a.e. and one has the relation 
\[
B'(y)=\frac{U'(y)}{|U'(y)|}\,U''(y)\,\mathbbm{1}_{\{|U'(y)|\ge \nu^{1/4}\}}
\quad\text{a.e.}
\]
Hence $|B'(y)|\le |U''(y)|\le \|U''\|_{L^\infty}$ a.e.
% \begin{align}
% \|B'\|_{L^\infty}=\lf\|\frac{U'}{|U'|}U''\mathbbm{1}_{|U'|\geq \nu^{\f14}}(\cdot)\rg\|_{L_y^\infty}\leq \|U''\|_{L^\infty}\;\Longrightarrow\; |B'(y)|\leq \|U''\|_{L^\infty} \text{ a.e.}.
% \end{align}
Now, taking the spatial derivative of $W$ yields that 
\begin{align}
    |\pa_y W(t,y)|=&\frac{\varsigma}{2}\lf|\frac{2}{3}\nu^{\f{1}{3}}B^{-\f{1}{3}}B't\lf(1-\f{1-\nu^{\f{1}{3}}B^{\f23}t}{|1-\nu^{\f{1}{3}}B^{\f23}t|}\rg)\rg|W(t,y)\\
    \leq& \frac{2\varsigma}{3}\mathbbm{1}_{t\in[\nu^{-\f{1}{3}}B^{-\f23}, \nu^{-\f12}]}\nu^{\f{1}{3}}B^{-\f{1}{3}}t\|U''\|_{L^\infty} W(t,y),\quad \text{a.e.}
\end{align}

Finally, for the $t>\nu^{-\f12}$ case, we have that 
\begin{align}
    |\pa_t W(t,y)|\equiv 0.
\end{align}
The spatial derivative can be estimated as follows
\begin{align}
    |\pa_y W(t,y)|= \lf|\frac{2\varsigma}{3}\nu^{-\f{1}{6}}B^{-\f{1}{3}}B'\rg| W\leq \frac{2\varsigma}{3}\nu^{-\f{1}{6}}B^{-\f{1}{3}}\|U''\|_{L^\infty_y} W(t,y),\quad \text{a.e.}
\end{align}
Combining the estimates above yields the estimates \eqref{W_est_t} and \eqref{W_est_y}. 
\end{proof}
    \bibliography{Sim}
\end{document}